\providecommand{\abs}[1]{\left\lvert#1 \right\rvert}
\providecommand{\Log}[1]{\mathrm{Log} #1}
\providecommand{\LLog}[1]{\mathrm{Log \, Log} #1}
\newtheorem{theorem}{Theorem}
\newtheorem{lemma}{Lemma}
\newtheorem{corollary}{Corollary}
\newenvironment{proof}
{\begin{trivlist}\item[\hskip%
\labelsep{{\it \noindent Proof.}}]}{\hfill $\square$
\end{trivlist}}
\newcounter{counter}
\newcommand{\counter}{\stepcounter{counter}\thecounter}
\newenvironment{remark}
{\begin{trivlist}\item[\hskip%
\labelsep{{\it \noindent Remark \counter}}]}{\hfill
\end{trivlist}}
\newenvironment{proofTheorem1}
{\begin{trivlist}\item[\hskip%
\labelsep{{\it \noindent Proof of Theorem 1.}}]}{\hfill $\square$
\end{trivlist}}
\newenvironment{proofCorollary}
{\begin{trivlist}\item[\hskip%
\labelsep{{\it \noindent Proof of Corollary 1.}}]}{\hfill $\square$
\end{trivlist}}
\newenvironment{acknowledgements}
{\begin{trivlist}\item[\hskip%
\labelsep{\bf \noindent Acknowledgements.}]}{\hfill
\end{trivlist}}
\numberwithin{equation}{section}
\begin{document}
\begin{center}
{\huge \textbf{On the rates of convergence for \\ sums of dependent random variables}} \\[25pt]
{\Large Jo\~{a}o Lita da Silva\footnote{\textit{E-mail address:} \texttt{jfls@fct.unl.pt; joao.lita@gmail.com}}} \\
\vspace{0.1cm}
\textit{Department of Mathematics and GeoBioTec \\ Faculty of Sciences and Technology \\
NOVA University of Lisbon \\ Quinta da Torre, 2829-516 Caparica,
Portugal}
\end{center}


\bigskip

\begin{abstract}
For a sequence $\{X_{n}, \, n \geqslant 1 \}$ of nonnegative random variables where $\max[\min(X_{n} - s,t),0]$, $t > s \geqslant 0$, satisfy a moment inequality, sufficient conditions are given under which $\sum_{k=1}^{n} (X_{k} - \mathbb{E} \, X_{k})/b_{n} \overset{\textnormal{a.s.}}{\longrightarrow} 0$. Our statement allows us to obtain a strong law of large numbers for sequences of pairwise negatively quadrant dependent random variables under sharp normalising constants.
\end{abstract}

\bigskip

{\textit{Key words and phrases:} Strong law of large numbers, pairwise NQD random variables}

\bigskip

{\small{\textit{2010 Mathematics Subject Classification:} 60F15}}

\bigskip

\section{Introduction}

\indent

The famous Marcinkiewicz-Zygmund strong law of large numbers states that if $\{X_{n}, \, n \geqslant 1 \}$ is a sequence of independent and identically distributed random variables then, for any $0 < p < 2$, $(\sum_{k=1}^{n} X_{k} - nc)/n^{1/p} \overset{\textnormal{a.s.}}{\longrightarrow} 0$ for some finite constant $c$ if and only if $\mathbb{E} \lvert X_{1} \rvert^{p} < \infty$, and if so, $c = \mathbb{E} \, X_{1}$ when $1 \leqslant p < 2$ while $c$ is arbitrary (and hence may be taken as zero) for $0 < p < 1$ (see \cite{Chow97}, page $125$). It should be pointed out that the key ingredient to establish this notable statement relies in the use of maximal inequalities (namely, the well-known L\'{e}vy inequalities). In 1981 and discarding maximal inequalities, Etemadi took advantage of the monotonicity of sums of nonnegative random variables, showing that if $\{X_{n}, \, n \geqslant 1 \}$ is a sequence of pairwise independent, and identically distributed random variables such that $\mathbb{E} \lvert X_{1} \rvert < \infty$ then $\sum_{k=1}^{n} X_{k}/n  \overset{\textnormal{a.s.}}{\longrightarrow} \mathbb{E} \, X_{1}$ (see \cite{Etemadi81}). Later on, Etemadi's argument was extended to general settings by many other authors (see, for instance, \cite{Chandra92}, \cite{Chandra93}, \cite{Chen16}, \cite{Csorgo83}, \cite{Petrov08} or \cite{Walk05}). Meanwhile, assuming $n^{1/p}$ $(1 < p < 2)$ as normalising constants, the study of moment conditions for sequences of pairwise independent, and identically distributed random variables has proceed; for this, we enhance \cite{Choi85}, \cite{Martikainen95} and specially \cite{Sung14}. Indeed, by combining both maximal moment inequality and monotonicity of sums of nonnegative random variables, Sung proved in \cite{Sung14} that $\mathbb{E} \lvert X_{1} \rvert^{p} (\mathrm{Ln} \, \mathrm{Ln} \, \lvert X_{1} \rvert)^{2(p - 1)} < \infty$ (here, $\mathrm{Ln} \, x := \max\{1, \ln x\}$) is a sufficient condition to obtain $\sum_{k=1}^{n} (X_{k} - \mathbb{E} \, X_{k})/n^{1/p} \overset{\textnormal{a.s.}}{\longrightarrow} 0$.

Nowadays, it is not known if original Marcinkiewicz-Zygmund strong law of large numbers for $1 < p < 2$ can be, or can not be, announced to sequences of pairwise independent, and identically distributed random variables. The goal of this paper is to give a contribution for this quest by exploring Sung's technique to establish sharped normalising constants on one hand, and to enlarge the class of random variables, on the other.

We shall need to introduce some relevant notations which will be employed along this paper. Associated to a probability space $(\Omega, \mathcal{F}, \mathbb{P})$, we shall consider the space $\mathscr{L}_{p}$ $(p > 0)$ of all measurable functions $X$ (necessarily random variables) for which $\mathbb{E} \abs{X}^{p} < \infty$. Given an event $A$ we shall denote the indicator random variable of the event $A$ by $I_{A}$. Throughout, the functions $x \mapsto \log \max\{x, \mathrm{e} \}$ and $x \mapsto \log \log \max\big\{x, \mathrm{e}^{\mathrm{e}} \big\}$ will be indicated by $\Log \, x$ and $\LLog \, x$, respectively; further, $g_{s,t}(x)$, $t > s \geqslant 0$ shall stands for the function $x \mapsto \max[\min(x - s,t),0]$. As usual and to make the computations be simpler looking we shall use the letter $C$ to denote any positive constant that can be explicitly computed, which is not necessarily the same on each appearance; the symbol $C(r)$ ($C(p,r)$ or $C(p,r,s)$) has the same meaning with the additional information that it depends on $r$ ($p,r$ or $p,r,s$). All over, $\lfloor x \rfloor$ and $\lceil x \rceil$ will be used to represent the largest integer not greater than $x$ and the smallest integer not less than $x$, respectively.

\section{Mainstream}

\indent

Let $\{X_{n}, \, n \geqslant 1 \}$ be a sequence of nonnegative random variables and suppose that, for any (constants) $t,s$ verifying $t > s \geqslant 0$, the (truncated) random sequence $\{g_{s,t}(X_{n}), \, n \geqslant 1 \}$ satisfies a moment inequality, namely there is a nondecreasing sequence of positive numbers $\{\lambda_{n} \}$ such that for some $r > 1$,
\begin{equation}\label{eq:2.1}
\mathbb{E} \abs{\sum_{k=\eta  + 1}^{\eta + n} \sum_{j=\xi_{k-1}+1}^{\xi_{k}} \big[g_{s,t}(X_{j}) - \mathbb{E} \,  g_{s,t}(X_{j}) \big]}^{r} \leqslant \lambda_{n}^{r} \sum_{k=\eta  + 1}^{\eta + n} \mathbb{E} \abs{\sum_{j=\xi_{k-1}+1}^{\xi_{k}}  \big[g_{s,t}(X_{j}) - \mathbb{E} \,  g_{s,t}(X_{j}) \big]}^{r}
\end{equation}
for all $\eta \geqslant 0$, $n \geqslant 1$ and every increasing sequence $\{\xi_{k} \}$ of nonnegative integers. It is well-known that if \eqref{eq:2.1} holds then
\begin{equation}\label{eq:2.2}
\mathbb{E} \left[\max_{1 \leqslant j \leqslant n} \abs{\sum_{k=\eta + 1}^{\eta + j} \sum_{i=\xi_{k-1}+1}^{\xi_{k}} \big[g_{s,t}(X_{i}) - \mathbb{E} \,  g_{s,t}(X_{i}) \big]} \right]^{r} \leqslant \Lambda_{n}^{r} \sum_{k=\eta + 1}^{\eta + n} \mathbb{E} \abs{\sum_{i=\xi_{k-1}+1}^{\xi_{k}}  \big[g_{s,t}(X_{i}) - \mathbb{E} \,  g_{s,t}(X_{i}) \big]}^{r}
\end{equation}
where $\Lambda_{1} := \lambda_{1}$ and $\Lambda_{n} := \lambda_{\lfloor(n + 2)/2 \rfloor} + \Lambda_{\lfloor(n + 2)/2 \rfloor - 1}$, $n \geqslant 2$ (see Theorem 4 of \cite{Moricz76}).

By employing both moment inequality \eqref{eq:2.1} and monotonicity of sums of nonnegative random variables, the following main statement provides us sufficient conditions for a sequence of random variables $\{X_{n}, \, n \geqslant 1 \}$ obeys the strong law of large numbers with respect to the normalising constants $\{b_{n} \}$. We emphasize that it is not admitted that the $X_{n}$'s have any particular dependence structure; the only restrictions on the dependence will be those imposed by assuming \eqref{eq:2.1}.

\begin{theorem}\label{thr:1}
Let $\{X_{n}, \, n \geqslant 1 \}$ be a sequence of nonnegative random variables satisfying $\mathbb{E} \, X_{n} < \infty$ for all $n \geqslant 1$ and verifying \eqref{eq:2.1} for some $r > 1$ and a nondecreasing sequence $\{\lambda_{n} \}$ of positive numbers. If $\{a_{n} \}$ is a sequence of nonnegative constants, $\{b_{n} \}$, $\{c_{n} \}$, $\{d_{n} \}$ are sequences of positive constants such that $c_{n} \leqslant d_{n}$ for all $n$, $b_{n}$ is nondecreasing unbounded, and there are increasing unbounded sequences $\{m_{k} \}$, $\{\ell_{k} \}$ of positive integers such that \\[-5pt]

\noindent \textnormal{(a)} ${\displaystyle \limsup_{k \rightarrow \infty}} \,$ $b_{m_{k+1}}/b_{m_{k}} < \infty$, ${\displaystyle \liminf_{k \rightarrow \infty}} \, \sum_{j=m_{k}}^{m_{k+1} - 1} a_{j} > 0$, \\[-5pt]

\noindent \textnormal{(b)} $\sum_{j=\ell_{k}+1}^{\ell_{k+1}} \mathbb{E} \, X_{j}I_{\left\{X_{j} > c_{j} \right\}} = o(b_{\ell_{k}})$ as $k \rightarrow \infty$, \\[-5pt]

\noindent \textnormal{(c)} $\sum_{n=1}^{\infty} \sum_{k=1}^{n} (a_{n} \Lambda_{n}^{r}/b_{n}^{r}) \mathbb{E} \, X_{k}^{r} I_{\{X_{k} \leqslant c_{k} \}} < \infty$, \\[-5pt]

\noindent \textnormal{(d)} $\sum_{n=1}^{\infty} \sum_{k=1}^{n} (a_{n} \Lambda_{n}^{r} c_{k}^{r}/b_{n}^{r}) \mathbb{P} \{X_{k} > c_{k} \} < \infty$, \\[-5pt]

\noindent \textnormal{(e)} $\sum_{n=1}^{\infty} \sum_{k=1}^{n} (a_{n}/b_{n}) \mathbb{E} \, X_{k} I_{\{X_{k} > d_{k} \}} < \infty$, \\[-5pt]

\noindent \textnormal{(f)} $\sum_{k=1}^{\infty} \sum_{j=\ell_{k}+1}^{\ell_{k+1}} \sum_{i=1}^{\ell_{k+1}} (a_{j} \Lambda_{k+1}^{r} \lambda_{\ell_{k} - \ell_{k-1}}^{r}/b_{\ell_{k}}^{r}) \mathbb{E} \, X_{i}^{r} I_{\{c_{i} < X_{i} \leqslant d_{i} \}} < \infty$, \\[-5pt]

\noindent \textnormal{(g)} $\sum_{k=1}^{\infty} \sum_{j=\ell_{k}+1}^{\ell_{k+1}} \sum_{i=1}^{\ell_{k+1}} (a_{j} \Lambda_{k+1}^{r} \lambda_{\ell_{k} - \ell_{k-1}}^{r} c_{i}^{r}/b_{\ell_{k}}^{r}) \mathbb{P} \{X_{i} > c_{i} \} < \infty$, \\[-5pt]

\noindent \textnormal{(h)} $\sum_{k=1}^{\infty} \sum_{j=\ell_{k}+1}^{\ell_{k+1}} \sum_{i=1}^{\ell_{k+1}} (a_{j} \Lambda_{k+1}^{r} \lambda_{\ell_{k} - \ell_{k-1}}^{r} d_{i}^{r}/b_{\ell_{k}}^{r}) \mathbb{P} \{X_{i} > d_{i} \} < \infty$, \\[-5pt]

\noindent then
\begin{equation*}
\frac{1}{b_{n}} \sum_{k = 1}^{n} (X_{k} - \mathbb{E} \, X_{k}) \overset{\textnormal{a.s.}}{\longrightarrow} 0.
\end{equation*}
\end{theorem}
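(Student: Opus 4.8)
The plan is to reduce the convergence of $\frac{1}{b_n}\sum_{k=1}^n (X_k - \mathbb{E}\,X_k)$ to the convergence along the two auxiliary subsequences $\{m_k\}$ and $\{\ell_k\}$, following Etemadi's monotonicity trick combined with Sung's maximal-inequality device. First I would perform a triple truncation of each $X_k$: write $X_k = g_{0,c_k}(X_k) + g_{c_k,d_k - c_k}(X_k) + (X_k - d_k)^+$ (so the three pieces are the part bounded by $c_k$, the part between $c_k$ and $d_k$, and the excess over $d_k$), and correspondingly split the centered sum into three centered sums $S_n^{(1)}, S_n^{(2)}, S_n^{(3)}$. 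The tail piece $S_n^{(3)}$ is handled directly: by (e) and the Borel--Cantelli/Kronecker route (or simply $\sum_n (a_n/b_n)\mathbb{E}\,X_k I_{\{X_k>d_k\}}<\infty$ forcing the relevant series to converge), together with monotonicity of $\sum (X_k - d_k)^+$, one gets $S_n^{(3)}/b_n \to 0$ a.s.; the centering term $\sum \mathbb{E}(X_k-d_k)^+/b_n$ vanishes by the same estimate. This is the easy part.

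Next I would treat the bounded piece $S_n^{(1)}$, which is where the moment inequality \eqref{eq:2.1}--\eqref{eq:2.2} enters. Along the subsequence $\{m_k\}$, apply \eqref{eq:2.2} (with the trivial block decomposition $\xi_k = k$, $\eta=0$) to the centered truncated variables $g_{0,c_k}(X_k) - \mathbb{E}\,g_{0,c_k}(X_k)$; this bounds $\mathbb{E}\big[\max_{j\le m_k}|S_j^{(1)}|\big]^r$ by $\Lambda_{m_k}^r \sum_{i\le m_k}\mathbb{E}\,|g_{0,c_i}(X_i) - \mathbb{E}\,g_{0,c_i}(X_i)|^r$, and the latter sum is controlled by $\sum_{i\le m_k}\big(\mathbb{E}\,X_i^r I_{\{X_i\le c_i\}} + c_i^r\,\mathbb{P}\{X_i>c_i\}\big)$. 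Multiplying by $a_n$, summing over the $n$ in each block $[m_k,m_{k+1})$, and invoking the growth control $b_{m_{k+1}}/b_{m_k}=O(1)$ and $\liminf\sum_{j=m_k}^{m_{k+1}-1}a_j>0$ from (a) — which is precisely the mechanism that converts a sum over all $n$ into a sum over the subsequence with comparable weights — hypotheses (c) and (d) make $\sum_k \mathbb{P}\{\max_{j\le m_k}|S_j^{(1)}|>\varepsilon b_{m_k}\}<\infty$ via Markov's inequality. Borel--Cantelli then gives $\max_{j\le m_k}|S_j^{(1)}|/b_{m_k}\to 0$ a.s., and a sandwiching argument (any $n$ lies between consecutive $m_k$'s, $b_n$ is nondecreasing, and $b_{m_{k+1}}/b_{m_k}=O(1)$) upgrades this to $S_n^{(1)}/b_n\to 0$ a.s.

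The middle piece $S_n^{(2)}$ (the part of $X_k$ between $c_k$ and $d_k$) is the main obstacle and is handled along the second subsequence $\{\ell_k\}$. Here the block structure in \eqref{eq:2.1} is used nontrivially: group indices $j\in(\ell_{k-1},\ell_k]$ into a single super-block, so the inner sum $\sum_{j=\xi_{k-1}+1}^{\xi_k}$ runs over one $\ell$-block while the outer index counts blocks, producing the factor $\Lambda_{k+1}^r \lambda_{\ell_k-\ell_{k-1}}^r$ that appears in (f), (g), (h). Applying \eqref{eq:2.2} at this block level and estimating $\mathbb{E}\,|g_{c_i,d_i-c_i}(X_i)|^r$ by a combination of $\mathbb{E}\,X_i^r I_{\{c_i<X_i\le d_i\}}$, $c_i^r\,\mathbb{P}\{X_i>c_i\}$ and $d_i^r\,\mathbb{P}\{X_i>d_i\}$, hypotheses (f)--(h) give summability of $\sum_k\mathbb{P}\{\max|S^{(2)}|>\varepsilon b_{\ell_k}\}$ over the $\ell$-blocks; Borel--Cantelli plus monotonicity/sandwiching then yields $S_n^{(2)}/b_n\to0$ a.s. Condition (b) is needed to control the centering contribution $\sum_{j}\mathbb{E}\,X_j I_{\{X_j>c_j\}}/b_{\ell_k}$ arising from the middle and tail truncations along $\{\ell_k\}$. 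The delicate points will be (i) getting the block-weight bookkeeping exactly right so that the per-$n$ weights $a_n\Lambda_n^r/b_n^r$ telescope correctly into the block-indexed series in (f)--(h) under only the one-sided control in (a), and (ii) ensuring that the three truncation levels are consistent so that $X_k = g_{0,c_k}(X_k)+g_{c_k,d_k-c_k}(X_k)+(X_k-d_k)^+$ holds identically and the centerings recombine to $\mathbb{E}\,X_k$ with no leftover term. Combining the three a.s.\ limits finishes the proof.
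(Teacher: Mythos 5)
Your proposal is correct and follows essentially the same route as the paper: the identical three-level truncation $X_k = g_{0,c_k}(X_k)+g_{c_k,d_k-c_k}(X_k)+(X_k-d_k)^{+}$, the maximal inequality \eqref{eq:2.2} with trivial blocks for the bounded part (via (c), (d)), an $L^1$ maximal bound for the tail part (via (e)), and the $\ell_k$-block application of \eqref{eq:2.1}--\eqref{eq:2.2} with the factor $\Lambda_{k+1}^{r}\lambda_{\ell_k-\ell_{k-1}}^{r}$ for the middle part (via (b), (f)--(h)). The only difference is presentational: the paper packages your ``sum over $m_k$-blocks, Borel--Cantelli, sandwich'' mechanism into the cited Theorem 2.2 of Hu--Sung--Volodin and into Lemma~\ref{lem:1} (which also re-routes the $\ell_k$-block series back through that same $m_k$ mechanism for the middle piece), whereas you carry it out by hand.
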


Recall that a random sequence $\{X_{n}, \, n \geqslant 1 \}$ is \emph{stochastically dominated} by a random variable $X$ if there exists a constant $C>0$ such that
\begin{equation*}
\sup_{n \geqslant 1} \mathbb{P} \left\{\abs{X_{n}} > t \right\} \leqslant C \, \mathbb{P} \left\{\abs{X} > t  \right\},
\end{equation*}
for each $t > 0$. Additionally, a sequence $\{X_{n}, \, n \geqslant 1 \}$ of random variables is said to be \emph{pairwise negatively quadrant dependent} (or, for short, pairwise NQD) if
\begin{equation*}
\mathbb{P} \left\{X_{k} \leqslant x_{k}, X_{j} \leqslant x_{j}  \right\} - \mathbb{P} \left\{X_{k} \leqslant x_{k} \right\} \mathbb{P} \left\{X_{j} \leqslant x_{j}  \right\} \leqslant 0
\end{equation*}
for all reals $x_{k}, x_{j}$ and all positive integers $k,j$ such that $k \neq j$.

In the next result, we shall present an application of the previous main theorem by establishing a strong law of large numbers for sequences of pairwise negatively quadrant dependent random variables under sharp normalising constants.

\begin{corollary}\label{cor:1}
If $1 < p < 2$ and $\{X_{n}, \, n \geqslant 1 \}$ is a sequence of pairwise NQD random variables stochastically dominated by a random variable $X \in \mathscr{L}_{p}$, then
\begin{equation*}
\frac{1}{n^{1/p} (\LLog \, n)^{2(p - 1)/p}} \sum_{k=1}^{n} (X_{k} - \mathbb{E} \, X_{k}) \overset{\textnormal{a.s.}}{\longrightarrow} 0.
\end{equation*}
\end{corollary}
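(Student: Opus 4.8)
The plan is to derive Corollary~\ref{cor:1} from Theorem~\ref{thr:1}, applied separately to the positive and negative parts of the $X_n$.

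\textbf{Step 1: reduction to nonnegative variables.} Write $X_n = X_n^{+} - X_n^{-}$. Since $x \mapsto x^{+}$ is nondecreasing, $\{X_n^{+}\}$ is again pairwise NQD; and since the inequality $\mathbb{P}\{X_k > x_k, X_j > x_j\} \leqslant \mathbb{P}\{X_k > x_k\}\,\mathbb{P}\{X_j > x_j\}$ is equivalent to the defining NQD inequality, $\{-X_n\}$ — hence $\{X_n^{-}\} = \{(-X_n)^{+}\}$ — is pairwise NQD as well. Both $\{X_n^{+}\}$ and $\{X_n^{-}\}$ are stochastically dominated by $\abs{X} \in \mathscr{L}_{p}$ and have finite first moments. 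It therefore suffices to prove the conclusion for a pairwise NQD sequence $\{X_n\}$ of \emph{nonnegative} random variables stochastically dominated by a nonnegative $X \in \mathscr{L}_{p}$, and then subtract the two resulting almost sure limits.

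\textbf{Step 2: the moment inequality.} For $0 \leqslant s < t$ the map $g_{s,t}$ is nondecreasing and bounded, so $\{g_{s,t}(X_n)\}$ is pairwise NQD with finite second moments; by Hoeffding's covariance identity the covariance of two distinct terms $g_{s,t}(X_i), g_{s,t}(X_j)$ is $\leqslant 0$. Since a covariance between two partial sums over disjoint blocks of indices is a sum of such nonpositive covariances, $\mathbb{E}\big|\sum_{k}(S_k - \mathbb{E}\, S_k)\big|^{2} = \mathrm{Var}\big(\sum_{k} S_k\big) \leqslant \sum_{k}\mathrm{Var}(S_k)$ for any family of block sums $S_k$. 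Thus \eqref{eq:2.1} holds with $r = 2$ and $\lambda_n \equiv 1$, and the recursion $\Lambda_1 = 1$, $\Lambda_n = 1 + \Lambda_{\lfloor(n+2)/2\rfloor - 1}$ gives $\Lambda_n \leqslant C\, \Log n$.

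\textbf{Step 3: choice of the free parameters and verification of (a)--(h).} Keep $b_n = n^{1/p}(\LLog n)^{2(p-1)/p}$, and set $a_n = 1/n$, $m_k = \ell_k = 2^{k}$; take truncation levels $c_n \leqslant d_n$, with $d_n$ comparable to $n^{1/p}$ up to a logarithmic factor and $c_n$ somewhat smaller, chosen so that the ``large'' summand $X_n I_{\{X_n > d_n\}}$ is almost surely eventually zero — this holds once $d_n \gtrsim n^{1/p}$, because then $\sum_n \mathbb{P}\{X_n > d_n\} \leqslant C\sum_n \mathbb{P}\{X > c\, n^{1/p}\} = C\sum_n \mathbb{P}\{X^{p} > c^{p} n\} \asymp \mathbb{E}\, X^{p} < \infty$ — while the ``small'' summand $X_n I_{\{X_n \leqslant c_n\}}$ has a second moment small enough for (c) and (d). Condition~(a) is immediate: $b_{2^{k+1}}/b_{2^{k}} \to 2^{1/p}$ and $\sum_{j=2^{k}}^{2^{k+1}-1} a_j \to \log 2$. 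After interchanging the order of summation, each of (b)--(h) becomes the convergence of an explicit numerical series; these are handled via the standard equivalence $\mathbb{E}\abs{Y}^{p} < \infty \Longleftrightarrow \sum_n \mathbb{P}\{\abs{Y} > n^{1/p}\} < \infty$, Markov's inequality $\mathbb{P}\{X > t\} \leqslant t^{-p}\mathbb{E}\, X^{p}$, the stochastic domination, and the monotonicity of the partial sums of the nonnegative truncations. The factor $(\LLog n)^{2(p-1)/p}$ in $b_n$ is exactly what absorbs the logarithmic penalty $\Lambda_n \asymp \Log n$: with the dyadic blocks $\ell_k = 2^{k}$ the number of blocks among $1,\dots,n$ is $\asymp \Log n$, so the M\'{o}ricz constant entering (f)--(h) is only of order $\LLog n$, against which $(\LLog n)^{2(p-1)/p}$ is calibrated (it plays, for $b_n$, the role that the doubly-logarithmic moment correction plays in Sung's pairwise-i.i.d.\ statement recalled in the Introduction). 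Once (a)--(h) are verified, Theorem~\ref{thr:1} gives $b_n^{-1}\sum_{k=1}^{n}(X_k - \mathbb{E}\, X_k) \overset{\textnormal{a.s.}}{\longrightarrow} 0$, and Step~1 finishes the proof.

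\textbf{Main obstacle.} The crux is Step~3. The eight conditions constrain $c_n, d_n, m_k, \ell_k$ in competing directions — roughly, (e) pushes $d_n$ upward while (f)--(h) push it downward, (b) bounds the growth rate of $\ell_k$, and (c) bounds the size of $c_n$ — so the exponents of the logarithmic corrections must be chosen inside a narrow admissible window determined by $1 < p < 2$. Furthermore the series estimates are sharp rather than routine: crude bounds such as $\mathbb{E}\, X^{2} I_{\{X \leqslant M\}} \leqslant M^{2-p}\mathbb{E}\, X^{p}$ are not good enough, and one must keep the tail of $X$ explicit and argue by Abel summation / dominated convergence to extract the required $o(\cdot)$ statements and summability. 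Showing that (b)--(h) can be satisfied \emph{simultaneously} under the sole hypothesis $\mathbb{E}\abs{X}^{p} < \infty$ is where essentially all the effort goes.
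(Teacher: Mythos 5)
Your Steps 1 and 2 coincide with the paper's proof: the decomposition $X_n = X_n^{+} - X_n^{-}$, the preservation of pairwise NQD under nondecreasing maps, and the variance argument giving \eqref{eq:2.1} with $r=2$, $\lambda_n \equiv 1$ and $\Lambda_n = O(\Log \, n)$ are exactly what the paper does. The gap is in Step 3, and it is not merely an omission of routine computations: your choice of blocks $\ell_k = 2^k$ actually breaks the hypotheses of Theorem~\ref{thr:1}. Condition (c), because of the factor $\Lambda_n^{2} \asymp \Log^{2} n$, forces $c_k \lesssim k^{1/p}(\Log \, k)^{-2/(2-p)}$ (this exponent is chosen so that $c_k^{2-p}$ cancels $\Log^{2}k$). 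But condition (b) over a dyadic block of length $\asymp \ell_k$ then requires, roughly, $\ell_k\, \mathbb{E}\, X I_{\{X > c_{\ell_k}\}} = o(b_{\ell_k})$, i.e. $(\Log \, \ell_k)^{2(p-1)/(2-p)} = O\big((\LLog \, \ell_k)^{2(p-1)/p}\big)$, which is false; the little-$o$ gain from $\mathbb{E}\,X^{p}I_{\{X>t\}} \to 0$ cannot compensate a polynomial-in-$\log$ discrepancy for a general $X \in \mathscr{L}_p$. This is precisely why the paper takes $\ell_k = \lfloor \mathrm{e}^{k^{(2-p)/p}} \rfloor$, so that $\ell_{k+1}/\ell_k \to 1$ and the block length is $\asymp s k^{s-1}\ell_k = o(\ell_k)$; the exponent $s = (2-p)/p$ is the extreme admissible value in Lemma~\ref{lem:4} ($s \leqslant (r-p)/[p(r-1)]$ with $r=2$), and the whole calibration of $b_n$, $c_n$, $d_n$ hinges on it. Your own ``Main obstacle'' paragraph correctly identifies that the conditions pull in opposite directions, but the resolution you propose does not lie in the admissible window.

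A second, smaller problem: your treatment of the large part via Borel--Cantelli needs $d_n \gtrsim n^{1/p}$, since $\sum_n \mathbb{P}\{X > n^{1/p}/(\LLog \, n)^{2/p}\}$ is equivalent to $\mathbb{E}\,X^{p}(\LLog \, X)^{2} < \infty$, which is not assumed. But with $d_n \asymp n^{1/p}$ condition (h) fails (the term $d_i^{2}\mathbb{P}\{X_i > d_i\}$ is then only $O(i^{2/p-1})$, and the resulting triple sum diverges). The paper's $d_n = n^{1/p}(\LLog \, n)^{-2/p}$ is $o(n^{1/p})$, and the large part $X_n'''$ is handled not by Borel--Cantelli but by the Kounias--Weng $L^{1}$ maximal inequality together with condition (e), which under this $d_n$ reduces to \eqref{eq:3.4} of Lemma~\ref{lem:2}. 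Finally, the verifications you defer to ``standard equivalences'' are the substance of Lemmas~\ref{lem:2}--\ref{lem:4} (in particular the Laplace-type tail estimate of Lemma~\ref{lem:3} for sums over $\{k : \ell_{k+1} \geqslant n\}$), so as written the proposal does not constitute a proof.
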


\begin{remark}
It is worthy to note that Corollary~\ref{cor:1} improves Corollary 2 of \cite{Lita18}.
\end{remark}

\begin{remark}
Let us observe that Marcinkiewicz-Zygmund strong law of large numbers for $p=1$ was already extended to sequences of pairwise NQD random variables, i.e. it was proved by Matu{\l}a in \cite{Matula92} that, if $\{X_{n}, \, n \geqslant 1 \}$ is a sequence of pairwise NQD and identically distributed random variables then, $(\sum_{k=1}^{n} X_{k} - nc)/n \overset{\textnormal{a.s.}}{\longrightarrow} 0$ for some finite constant $c$ if and only if $\mathbb{E} \lvert X_{1} \rvert < \infty$, and if so, $c = \mathbb{E} \, X_{1}$ (see Theorem 1 of \cite{Matula92}). 
\end{remark}

\begin{remark}
In \cite{Li06}, strong laws of large numbers were stated for sequences of pairwise negatively dependent random variables with normalizing constants $b_{n}$ satisfying $b_{n} = n \uparrow$. We stress out that this condition prevents us from choosing $b_{n} = n^{1/p} (\LLog \, n)^{2(p - 1)/p}$, $1 < p < 2$.
\end{remark}

\section{Lemmata and proofs}

\indent

We begin this section by revisiting Lemma 2.5 of \cite{Sung14}.

\begin{lemma}\label{lem:1}
Let $\{X_{n}, \, n \geqslant 1 \}$ be a sequence of nonnegative random variables, $\{b_{n} \}$ a nondecreasing unbounded sequence of positive constants, and $\{a_{n} \}$ a sequence of nonnegative constants. If there are increasing unbounded sequences $\{m_{k} \}$ and $\{\ell_{k} \}$ of positive integers such that \\[-5pt]

\noindent \textnormal{(i)} ${\displaystyle \limsup_{k \rightarrow \infty}} \, b_{m_{k+1}}/b_{m_{k}} < \infty$ and ${\displaystyle \liminf_{k \rightarrow \infty}} \, \sum_{j=m_{k}}^{m_{k+1}-1} a_{j} > 0$, \\[-2pt]

\noindent \textnormal{(ii)} $\sum_{j=\ell_{k}+1}^{\ell_{k+1}} \mathbb{E} \, X_{j} = o(b_{\ell_{k}})$ as $k \rightarrow \infty$, \\[-2pt]

\noindent \textnormal{(iii)} $\sum_{k=1}^{\infty} \mathbb{P} \Big\{{\displaystyle \max_{1 \leqslant n \leqslant k+1}} \big\lvert\sum_{i=1}^{\ell_{n}}(X_{i} - \mathbb{E} \, X_{i}) \big\rvert > \varepsilon b_{\ell_{k}} \Big\} \sum_{j=\ell_{k}+1}^{\ell_{k+1}} a_{j} < \infty$ for all $\varepsilon > 0$, \\[-2pt]

\noindent then $\sum_{k=1}^{n}(X_{k} - \mathbb{E} \, X_{k})/b_{n} \overset{\textnormal{a.s.}}{\longrightarrow} 0$.
\end{lemma}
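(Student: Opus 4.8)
The plan is to establish the a.s.\ convergence along the subsequence $\{b_{\ell_{k}}\}$ first, then fill in the gaps using monotonicity of the partial sums of the nonnegative $X_{n}$'s and the comparability condition (i). For the subsequential step, I would write $S_{n} := \sum_{i=1}^{n}(X_{i} - \mathbb{E}\,X_{i})$ and show that $S_{\ell_{k}}/b_{\ell_{k}} \overset{\textnormal{a.s.}}{\longrightarrow} 0$. The natural route is Borel--Cantelli: for fixed $\varepsilon > 0$ one wants $\sum_{k} \mathbb{P}\{|S_{\ell_{k}}| > \varepsilon b_{\ell_{k}}\} < \infty$, but hypothesis (iii) only gives summability after multiplying by $\sum_{j=\ell_{k}+1}^{\ell_{k+1}} a_{j}$, together with a maximum over $n \leqslant k+1$ inside the probability. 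The role of (i) is precisely to convert this: since $\liminf_{k} \sum_{j=m_{k}}^{m_{k+1}-1} a_{j} > 0$, the indices $m_{k}$ are "spread out" relative to the $a_{j}$-weights, and one shows (as in \cite{Sung14}) that $\sum_{j=\ell_{k}+1}^{\ell_{k+1}} a_{j}$ must be bounded below along a subsequence of $k$'s comparable to the $m_{k}$-scale, so (iii) forces $\mathbb{P}\{\max_{1\leqslant n\leqslant k+1}|S_{\ell_{n}}| > \varepsilon b_{\ell_{k}}\} \to 0$ fast enough along that scale; combined with the $b_{m_{k+1}}/b_{m_{k}} = O(1)$ bound this yields a.s.\ convergence of the maxima, hence of $S_{\ell_{k}}/b_{\ell_{k}}$.

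Once $S_{\ell_{k}}/b_{\ell_{k}} \to 0$ a.s., I would handle a general index $n$ by sandwiching: pick $k = k(n)$ with $\ell_{k} \leqslant n < \ell_{k+1}$. Because each $X_{i} \geqslant 0$, the raw sums $T_{n} := \sum_{i=1}^{n} X_{i}$ are nondecreasing, so $T_{\ell_{k}} \leqslant T_{n} \leqslant T_{\ell_{k+1}}$. Subtracting the (deterministic) means and dividing by $b_{n}$, and using that $b_{n}$ is nondecreasing so $b_{\ell_{k}} \leqslant b_{n} \leqslant b_{\ell_{k+1}}$, one gets
\begin{equation*}
\frac{T_{\ell_{k}}}{b_{\ell_{k+1}}} - \frac{1}{b_{n}}\sum_{i=1}^{n}\mathbb{E}\,X_{i} \;\leqslant\; \frac{S_{n}}{b_{n}} \;\leqslant\; \frac{T_{\ell_{k+1}}}{b_{\ell_{k}}} - \frac{1}{b_{n}}\sum_{i=1}^{n}\mathbb{E}\,X_{i}.
\end{equation*}
Rewriting $T_{\ell_{k+1}}/b_{\ell_{k}} = (b_{\ell_{k+1}}/b_{\ell_{k}})\cdot (S_{\ell_{k+1}}/b_{\ell_{k+1}}) + (1/b_{\ell_{k}})\sum_{i=1}^{\ell_{k+1}}\mathbb{E}\,X_{i}$, the first term tends to $0$ a.s.\ provided $b_{\ell_{k+1}}/b_{\ell_{k}} = O(1)$ — which needs to be derived from (i) applied to the $m_{k}$-sequence and the interlacing of $\{\ell_{k}\}$ and $\{m_{k}\}$ — and the mean-difference terms are controlled because condition (ii), $\sum_{j=\ell_{k}+1}^{\ell_{k+1}}\mathbb{E}\,X_{j} = o(b_{\ell_{k}})$, gives $(1/b_{\ell_{k}})\sum_{i=1}^{\ell_{k+1}}\mathbb{E}\,X_{i} - (1/b_{n})\sum_{i=1}^{n}\mathbb{E}\,X_{i} \to 0$ after a telescoping/monotonicity estimate on $\sum_{i=1}^{n}\mathbb{E}\,X_{i}$. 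The symmetric lower bound is treated the same way, and squeezing gives $S_{n}/b_{n} \to 0$ a.s.

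The step I expect to be the main obstacle is the bookkeeping that turns hypothesis (i) — stated in terms of the auxiliary sequence $\{m_{k}\}$ — into the two facts actually used downstream, namely that $b_{\ell_{k+1}}/b_{\ell_{k}}$ stays bounded and that the weighted summability (iii) degenerates to plain summability of $\mathbb{P}\{\max_{n\leqslant k+1}|S_{\ell_{n}}| > \varepsilon b_{\ell_{k}}\}$ along a cofinal set of $k$. This requires carefully comparing the $\ell$-blocks to the $m$-blocks: one chooses, for each $m_{k}$, the adjacent $\ell$-indices and exploits $\liminf \sum_{j=m_{k}}^{m_{k+1}-1} a_{j} > 0$ together with the monotone unboundedness of $b_{n}$ to show the $b_{\ell}$-ratios are uniformly bounded and the tail series converges. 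Everything else — the Borel--Cantelli application, the monotone sandwiching, and the disposal of the expectation terms via (ii) — is routine once that comparison is in place; I would cite \cite{Sung14} for the technical core of this comparison and reproduce only the adaptation needed here.
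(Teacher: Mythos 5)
Your overall architecture --- prove a.s.\ convergence along $\{\ell_{k}\}$ by Borel--Cantelli, then sandwich a general $n$ between $\ell_{k}$ and $\ell_{k+1}$ using nonnegativity and (ii) --- is not the paper's, and the step you defer as ``bookkeeping'' is where the plan breaks. The paper never proves subsequential convergence along $\{\ell_{k}\}$ at all: it invokes Theorem 2.2 of \cite{Hu16}, which says that under (i) the single condition $\sum_{n=1}^{\infty} a_{n} \mathbb{P} \big\{\max_{1 \leqslant j \leqslant n} \big\lvert \sum_{i=1}^{j}(X_{i} - \mathbb{E}\,X_{i}) \big\rvert > \varepsilon b_{n} \big\} < \infty$ already yields the conclusion, and then devotes the whole proof to verifying that condition from (ii) and (iii): it regroups the sum over $n$ into the blocks $\ell_{k} < n \leqslant \ell_{k+1}$, uses nonnegativity to bound $\max_{j \leqslant \ell_{k+1}} \lvert S_{j} \rvert$ by $2\max_{1 \leqslant n \leqslant k+1} \lvert S_{\ell_{n}} \rvert$ plus a mean term, and absorbs the mean term into $\varepsilon b_{\ell_{k}}/2$ via (ii) and Lemma 2.3 of \cite{Sung14}. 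In that route, hypothesis (i) is consumed entirely inside the cited theorem, at the $m$-block scale; no comparison between the $\ell$-blocks and the $m$-blocks is ever needed.

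The gap in your plan is that the two facts you require at the $\ell$-scale --- boundedness of $b_{\ell_{k+1}}/b_{\ell_{k}}$, and removability of the weights $\sum_{j=\ell_{k}+1}^{\ell_{k+1}} a_{j}$ from (iii) --- do not follow from (i), because the lemma imposes no relation whatsoever between $\{m_{k}\}$ and $\{\ell_{k}\}$ (there is no ``interlacing'' to exploit). Concretely, $b_{n} = n$, $a_{n} = 1/n$, $m_{k} = 2^{k}$ satisfy (i), yet with $\ell_{k} = 2^{2^{k}}$ one gets $b_{\ell_{k+1}}/b_{\ell_{k}} = 2^{2^{k}} \rightarrow \infty$, so the transfer from $S_{\ell_{k+1}}/b_{\ell_{k+1}} \rightarrow 0$ to $S_{\ell_{k+1}}/b_{\ell_{k}} \rightarrow 0$ in your sandwich fails. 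Likewise, $a_{n} = 1$ for $n \in \{m_{k}\}$ and $a_{n} = 0$ otherwise satisfies (i), but then $\sum_{j=\ell_{k}+1}^{\ell_{k+1}} a_{j} = 0$ for every $\ell$-block containing no $m_{k}$, so (iii) carries no information about those $k$ and Borel--Cantelli along $\{\ell_{k}\}$ cannot be run. These are not technicalities that a more careful block comparison repairs; the lemma still holds in such instances, but only because the weighted maximal criterion of \cite{Hu16} sidesteps the $\ell$-scale entirely. To salvage your route you would have to add hypotheses such as $\limsup_{k} b_{\ell_{k+1}}/b_{\ell_{k}} < \infty$, which would weaken the statement. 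The correct fix is to run your monotonicity estimate in the opposite direction --- use it to dominate the block maxima appearing in the weighted sum over all $n$ --- and then hand the result to Theorem 2.2 of \cite{Hu16}.
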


\begin{proof}
According to Theorem 2.2 of \cite{Hu16}, it suffices to prove
\begin{equation}\label{eq:3.1}
\sum_{n=1}^{\infty} a_{n} \mathbb{P} \left\{\max_{1 \leqslant j \leqslant n} \abs{\sum_{i=1}^{j}(X_{i} - \mathbb{E} \, X_{i})} > \varepsilon b_{n} \right\} < \infty
\end{equation}
for all $\varepsilon > 0$. Setting $\ell_{0} := 0 =: b_{0}$, we have
\begin{align*}
&\sum_{n=1}^{\infty} a_{n} \mathbb{P} \left\{\max_{1 \leqslant j \leqslant n} \abs{\sum_{i=1}^{j}(X_{i} - \mathbb{E} \, X_{i})} > \varepsilon b_{n} \right\} \\
&\qquad \leqslant \sum_{k=0}^{\infty} \sum_{n=\ell_{k}+1}^{\ell_{k+1}} a_{n} \mathbb{P} \left\{\max_{1 \leqslant j \leqslant n} \abs{\sum_{i=1}^{j}(X_{i} - \mathbb{E} \, X_{i})} > \varepsilon b_{n} \right\} \\
&\qquad \leqslant \sum_{k=0}^{\infty} \mathbb{P} \left\{\max_{1 \leqslant j \leqslant \ell_{k+1}} \abs{\sum_{i=1}^{j}(X_{i} - \mathbb{E} \, X_{i})} > \varepsilon b_{\ell_{k}} \right\} \sum_{n=\ell_{k}+1}^{\ell_{k+1}} a_{n}.
\end{align*}
For any $\ell_{k} < j \leqslant \ell_{k+1}$ we get
\begin{align*}
\sum_{i=1}^{j} (X_{i} - \mathbb{E} \, X_{i}) &\leqslant \sum_{i=1}^{\ell_{k+1}} X_{i} - \sum_{i=1}^{\ell_{k}} \mathbb{E} \, X_{i} \\
&= \sum_{i=1}^{\ell_{k+1}} (X_{i} - \mathbb{E} \, X_{i}) + \sum_{i = \ell_{k}+1}^{\ell_{k+1}} \mathbb{E} \, X_{i} \\
&\leqslant \abs{\sum_{i=1}^{\ell_{k+1}} (X_{i} - \mathbb{E} \, X_{i})} + \abs{\sum_{i=1}^{\ell_{k}} (X_{i} - \mathbb{E} \, X_{i})} + \sum_{i = \ell_{k}+1}^{\ell_{k+1}} \mathbb{E} \, X_{i}
\end{align*}
and
\begin{align*}
\sum_{i=1}^{j} (X_{i} - \mathbb{E} \, X_{i}) &\geqslant \sum_{i=1}^{\ell_{k}} X_{i} - \sum_{i=1}^{\ell_{k+1}} \mathbb{E} \, X_{i} \\
&= \sum_{i=1}^{\ell_{k}} (X_{i} - \mathbb{E} \, X_{i}) - \sum_{i = \ell_{k}+1}^{\ell_{k+1}} \mathbb{E} \, X_{i} \\
&\geqslant - \abs{\sum_{i=1}^{\ell_{k+1}} (X_{i} - \mathbb{E} \, X_{i})} - \abs{\sum_{i=1}^{\ell_{k}} (X_{i} - \mathbb{E} \, X_{i})} - \sum_{i = \ell_{k}+1}^{\ell_{k+1}} \mathbb{E} \, X_{i}
\end{align*}
which yields
\begin{equation*}
\max_{\ell_{k} < j \leqslant \ell_{k+1}} \abs{\sum_{i=1}^{j} (X_{i} - \mathbb{E} \, X_{i})} \leqslant \abs{\sum_{i=1}^{\ell_{k+1}} (X_{i} - \mathbb{E} \, X_{i})} + \abs{\sum_{i=1}^{\ell_{k}} (X_{i} - \mathbb{E} \, X_{i})} + \sum_{i = \ell_{k}+1}^{\ell_{k+1}} \mathbb{E} \, X_{i}.
\end{equation*}
Hence,
\begin{align*}
\max_{0 < j \leqslant \ell_{n+1}} \abs{\sum_{i=1}^{j} (X_{i} - \mathbb{E} \, X_{i})} &= \max_{0 \leqslant k \leqslant n} \left[\max_{\ell_{k} < j \leqslant \ell_{k+1}} \abs{\sum_{i=1}^{j} (X_{i} - \mathbb{E} \, X_{i})} \right] \\
&\leqslant 2 \max_{1 \leqslant k \leqslant n+1} \abs{\sum_{i=1}^{\ell_{k}} (X_{i} - \mathbb{E} \, X_{i})} + \max_{0 \leqslant k \leqslant n} \sum_{i = \ell_{k}+1}^{\ell_{k+1}} \mathbb{E} \, X_{i}.
\end{align*}
From Lemma 2.3 of \cite{Sung14}, we have
\begin{equation*}
\frac{1}{b_{\ell_{n}}} \max_{0 \leqslant k \leqslant n} \sum_{i=\ell_{k}+1}^{\ell_{k+1}} \mathbb{E} \, X_{i} = o(1), \quad n \rightarrow \infty
\end{equation*}
so that, for each $\varepsilon > 0$ fix, there exists a positive integer $n_{0}$ such that
\begin{equation*}
\max_{1 \leqslant j \leqslant \ell_{n+1}} \abs{\sum_{i=1}^{j} (X_{i} - \mathbb{E} \, X_{i})} \leqslant 2 \max_{1 \leqslant k \leqslant n+1} \abs{\sum_{i=1}^{\ell_{k}} (X_{i} - \mathbb{E} \, X_{i})} + \frac{\varepsilon b_{\ell_{n}}}{2}
\end{equation*}
for all $n \geqslant n_{0}$. Thus,
\begin{align*}
&\sum_{n=n_{0}}^{\infty} \mathbb{P} \left\{\max_{1 \leqslant j \leqslant \ell_{n+1}} \abs{\sum_{i=1}^{j}(X_{i} - \mathbb{E} \, X_{i})} > \varepsilon b_{\ell_{n}} \right\} \sum_{k=\ell_{n}+1}^{\ell_{n+1}} a_{k} \\
&\qquad \leqslant \sum_{n=n_{0}}^{\infty} \mathbb{P} \left\{\max_{1 \leqslant j \leqslant n+1} \abs{\sum_{i=1}^{\ell_{j}}(X_{i} - \mathbb{E} \, X_{i})} > \frac{\varepsilon b_{\ell_{n}}}{4} \right\} \sum_{k=\ell_{n}+1}^{\ell_{n+1}} a_{k} < \infty
\end{align*}
by assumption (iii) and \eqref{eq:3.1} holds. The thesis is established.
\end{proof}

The following lemmas will play a central role in the proof of Corollary~\ref{cor:1}.

\begin{lemma}\label{lem:2}
Let $\{X_{n}, \, n \geqslant 1 \}$ be a sequence of random variables stochastically dominated by a random variable $X \in \mathscr{L}_{p}$ for some $0 < p < 2$. If $\varphi$ is a real-valued function defined on whole real line such that $\lvert \varphi(x) \rvert \leqslant \lvert x \rvert$ for all $x$ then, for every $r > p$,
\begin{equation}\label{eq:3.2}
\sum_{n=1}^{\infty} \sum_{k=1}^{n} \frac{\Log^{r} n}{n^{r/p + 1}} \mathbb{E} \lvert \varphi(X_{k}) \rvert^{r} I_{\left\{\lvert \varphi(X_{k}) \rvert \leqslant \frac{k^{1/p}}{\Log^{r/(r - p)} k} \right\}} < \infty
\end{equation}
and
\begin{equation}\label{eq:3.3}
\sum_{n=1}^{\infty} \sum_{k=1}^{n} \frac{k^{r/p} \Log^{r} n}{n^{r/p + 1} \Log^{r^{2}/(r - p)}k} \mathbb{P} \left\{\lvert \varphi(X_{k}) \rvert > \frac{k^{1/p}}{\Log^{r/(r - p)}k} \right\} < \infty.
\end{equation}
Furthermore, if $p > 1$ then
\begin{equation}\label{eq:3.4}
\sum_{n=1}^{\infty} \sum_{k=1}^{n} \frac{1}{n^{1/p + 1} (\LLog \, n)^{r(p - 1)/[p(r - 1)]}} \mathbb{E} \lvert \varphi(X_{k}) \rvert I_{\left\{\lvert \varphi(X_{k}) \rvert > \frac{k^{1/p}}{(\LLog \, k)^{r/[p(r - 1)]}} \right\}} < \infty.
\end{equation}
\end{lemma}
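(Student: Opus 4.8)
The three statements all have the same flavour: after writing $\mathbb{E}\lvert\varphi(X_k)\rvert^{r}I_{\{\cdots\}}$ (or the tail probability) via the layer-cake formula and using the stochastic domination $\mathbb{P}\{\lvert\varphi(X_k)\rvert>t\}\leqslant\mathbb{P}\{\lvert X_k\rvert>t\}\leqslant C\,\mathbb{P}\{\lvert X\rvert>t\}$, one is reduced to interchanging the order of summation in $n$ and $k$, performing the inner sum over $n\geqslant k$ of the weight, and checking that the resulting single sum over $k$ (weighted against the law of $X$) is dominated by $C\,\mathbb{E}\lvert X\rvert^{p}<\infty$. So the plan is: (1) bound $\varphi$ out of the picture using $\lvert\varphi(x)\rvert\leqslant\lvert x\rvert$ and domination; (2) swap the two sums so that $\sum_{n=1}^{\infty}\sum_{k=1}^{n}=\sum_{k=1}^{\infty}\sum_{n=k}^{\infty}$; (3) estimate the tail sum $\sum_{n\geqslant k}\Log^{r}n\,/\,n^{r/p+1}$ (resp. $\sum_{n\geqslant k}n^{-1/p-1}(\LLog n)^{-\alpha}$) by $C\,k^{-r/p}\Log^{r}k$ (resp. $C\,k^{-1/p}(\LLog k)^{-\alpha}$) — a standard Abel/integral comparison since $r/p>1$ and $1/p>0$ — which, crucially, wipes out the truncation-level $\Log$ powers in the numerator against those in the denominator of the truncation bound; (4) read off a clean single sum over $k$ and compare it to $\mathbb{E}\lvert X\rvert^{p}$.

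For \eqref{eq:3.2}, after steps (1)–(3) the summand becomes (up to $C$) $k^{-r/p}\Log^{r}k\cdot\mathbb{E}\lvert X\rvert^{r}I_{\{\lvert X\rvert\leqslant k^{1/p}\Log^{-r/(r-p)}k\}}$; summing this over $k$ and interchanging with the expectation, for a fixed value $x=\lvert X\rvert$ the constraint $x\leqslant k^{1/p}\Log^{-r/(r-p)}k$ forces $k\gtrsim x^{p}\Log^{\text{something}}$, and $\sum_{k\gtrsim x^{p}}k^{-r/p}\Log^{r}k\leqslant C\,x^{p-r}(\Log x)^{r}\cdot(\text{correction})$, whereupon $x^{r}\cdot x^{p-r}(\Log x)^{r}$ times the leftover $\Log$-powers collapses exactly to $x^{p}$ — this matching of exponents is the whole point of the peculiar constant $r/(r-p)$ in the truncation. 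Statement \eqref{eq:3.3} is handled the same way: the extra factor $k^{r/p}\Log^{-r^2/(r-p)}k$ in the numerator combines with $k^{-r/p}\Log^{r}k$ from the tail sum to give $\Log^{r-r^2/(r-p)}k=\Log^{-rp/(r-p)}k$, and the tail probability $\mathbb{P}\{\lvert X\rvert>k^{1/p}\Log^{-r/(r-p)}k\}$ summed over $k$ against this weight again reduces, via layer-cake and domination, to a constant times $\mathbb{E}\lvert X\rvert^{p}$. For \eqref{eq:3.4} (only $p>1$), one uses $\mathbb{E}\lvert X\rvert I_{\{\lvert X\rvert>t\}}=t\,\mathbb{P}\{\lvert X\rvert>t\}+\int_{t}^{\infty}\mathbb{P}\{\lvert X\rvert>u\}\,du$, domination, $\mathbb{E}\lvert X\rvert^{p}<\infty$, and the tail sum $\sum_{n\geqslant k}n^{-1/p-1}(\LLog n)^{-r(p-1)/[p(r-1)]}\leqslant C\,k^{-1/p}(\LLog k)^{-r(p-1)/[p(r-1)]}$; the truncation level $k^{1/p}(\LLog k)^{-r/[p(r-1)]}$ is arranged so that the surviving $\LLog$-powers cancel and leave a convergent $k$-sum.

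The main obstacle is purely bookkeeping: keeping the three interlocking exponent computations straight so that the powers of $\Log$ (in \eqref{eq:3.2}–\eqref{eq:3.3}) and of $\LLog$ (in \eqref{eq:3.4}) cancel \emph{exactly}, leaving no stray logarithmic factor that would destroy convergence at the borderline $\mathbb{E}\lvert X\rvert^{p}<\infty$. A secondary technical point is justifying the tail-sum asymptotics $\sum_{n\geqslant k}\Log^{r}n/n^{r/p+1}\asymp \Log^{r}k/k^{r/p}$ uniformly in $k$; this follows from comparison with $\int_{k}^{\infty}(\log x)^{r}x^{-r/p-1}\,dx$ and integration by parts, and is the same estimate used in \cite{Sung14}, so I would state it as a one-line sub-claim and defer the integral computation. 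All Fubini/Tonelli interchanges are legitimate since every term is nonnegative.
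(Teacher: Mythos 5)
Your plan is correct and follows essentially the same route as the paper: interchange the $n$ and $k$ sums, estimate the inner tail sums $\sum_{n\geqslant k}\Log^{r}n/n^{r/p+1}$ and $\sum_{n\geqslant k}n^{-1/p-1}(\LLog\,n)^{-\alpha}$ (the paper gets these from Stolz--Ces\`{a}ro rather than integral comparison), pass to $X$ via stochastic domination and the layer-cake formula (which also produces the boundary term $T_{k}\,\mathbb{P}\{\lvert X\rvert^{r}>T_{k}\}$ that your plan for \eqref{eq:3.3} covers), and then verify the exact cancellation of the $\Log$ and $\LLog$ powers against $\mathbb{E}\lvert X\rvert^{p}<\infty$. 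The only cosmetic difference is that the paper organizes the final reduction through annulus decompositions $A_{j}$, $B_{j}$ of the range of $\lvert X\rvert$ and a second sum interchange, where you integrate directly against the law of $X$; these are the same computation.
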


\begin{proof}
Recall that Stolz--Ces\`{a}ro theorem ensures
\begin{gather}
\sum_{k=1}^{n} \frac{\Log^{\alpha} k}{k^{\beta}} \sim \frac{n^{1 - \beta}\Log^{\alpha} n}{1 - \beta}, \quad n \rightarrow \infty \label{eq:3.5} \\
\sum_{k=n}^{\infty} \frac{\Log^{\alpha} k}{k^{\delta}} \sim  \frac{n^{1 - \delta} \Log^{\alpha} n}{\delta - 1}, \quad n \rightarrow \infty \label{eq:3.6} \\
\sum_{k=1}^{n} \frac{\LLog^{\alpha} k}{k^{\beta}} \sim \frac{n^{1 - \beta} \LLog^{\alpha} n}{1 - \beta}, \quad n \rightarrow \infty \label{eq:3.7} \\
\sum_{k=n}^{\infty} \frac{\LLog^{\alpha} k}{k^{\delta}} \sim  \frac{n^{1 - \delta} \LLog^{\alpha} n}{\delta - 1}, \quad n \rightarrow \infty \label{eq:3.8}
\end{gather}
for all reals $\alpha, \beta, \delta$ such that $\beta < 1$, $\delta > 1$. Suppose
\begin{equation*}
A_{k} = \left\{\frac{(k - 1)^{1/p}}{\Log^{r/(r - p)}(k - 1)} < \lvert X \rvert \leqslant \frac{k^{1/p}}{\Log^{r/(r - p)}k} \right\}, \quad k \geqslant 1.
\end{equation*}
Hence, from \eqref{eq:3.5}, \eqref{eq:3.6} and Lemma 3 of \cite{Lita15} we have
\begin{align*}
&\sum_{n=1}^{\infty} \sum_{k=1}^{n} \frac{\Log^{r} n}{n^{r/p + 1}} \mathbb{E} \lvert \varphi(X_{k}) \rvert^{r} I_{\left\{\lvert \varphi(X_{k}) \rvert \leqslant \frac{k^{1/p}}{\Log^{r/(r - p)} k} \right\}} \\
&\quad = \sum_{k=1}^{\infty} \sum_{n=k}^{\infty} \frac{\Log^{r} n}{n^{r/p + 1}} \mathbb{E} \lvert \varphi(X_{k}) \rvert^{r} I_{\left\{\lvert \varphi(X_{k}) \rvert \leqslant \frac{k^{1/p}}{\Log^{r/(r - p)} k} \right\}} \\
&\quad \leqslant C(p,r) \sum_{k=1}^{\infty} \frac{\Log^{r} k}{k^{r/p}} \mathbb{E} \lvert \varphi(X_{k}) \rvert^{r} I_{\left\{\lvert \varphi(X_{k}) \rvert^{r} \leqslant \frac{k^{r/p}}{\Log^{r^{2}/(r - p)} k} \right\}} \\
&\quad \leqslant C(p,r) \sum_{k=1}^{\infty} \frac{\Log^{r} k}{k^{r/p}} \int_{0}^{k^{r/p}/\Log^{r^{2}/(r - p)} k} \mathbb{P} \left\{\lvert \varphi(X_{k}) \rvert^{r} > u \right\} \, \mathrm{d}u \\
&\quad \leqslant C(p,r) \sum_{k=1}^{\infty} \frac{\Log^{r} k}{k^{r/p}} \int_{0}^{k^{r/p}/\Log^{r^{2}/(r - p)} k} \mathbb{P} \left\{\lvert X_{k} \rvert^{r} > u \right\} \, \mathrm{d}u \\
&\quad \leqslant C(p,r) \sum_{k=1}^{\infty} \frac{\Log^{r} k}{k^{r/p}} \int_{0}^{k^{r/p}/\Log^{r^{2}/(r - p)} k} \mathbb{P} \left\{\lvert X \rvert^{r} > u \right\} \, \mathrm{d}u \\
&\quad \leqslant C(p,r) \sum_{k=1}^{\infty} \frac{\Log^{r} k}{k^{r/p}} \mathbb{E} \lvert X \rvert^{r} I_{\left\{\lvert X \rvert^{r} \leqslant \frac{k^{r/p}}{\Log^{r^{2}/(r - p)} k} \right\}} \\
&\qquad + C(p,r) \sum_{k=1}^{\infty} \frac{1}{\Log^{rp/(r - p)}k} \mathbb{P} \left\{\lvert X \rvert > \frac{k^{1/p}}{\Log^{r/(r - p)} k} \right\} \\
&\quad = C(p,r) \sum_{k=1}^{\infty} \sum_{j=1}^{k} \frac{\Log^{r} k}{k^{r/p}} \mathbb{E} \lvert X \rvert^{r} I_{A_{j}} + C(p,r) \sum_{k=1}^{\infty} \frac{1}{\Log^{rp/(r - p)}k} \mathbb{P} \left\{\lvert X \rvert^{p} > \frac{k}{\Log^{rp/(r - p)} k} \right\} \\
&\quad \leqslant C(p,r) \sum_{j=1}^{\infty} \sum_{k=j}^{\infty} \frac{\Log^{r} k}{k^{r/p}} \mathbb{E} \lvert X \rvert^{r} I_{A_{j}} + C(p,r) \, \mathbb{E} \, \lvert X \rvert^{p} \\
&\quad \leqslant C(p,r) \sum_{j=1}^{\infty} \frac{\Log^{r}j}{j^{r/p - 1}} \mathbb{E} \lvert X \rvert^{r} I_{A_{j}} + C(p,r) \, \mathbb{E} \, \lvert X \rvert^{p} \\
&\quad \leqslant C(p,r) \sum_{j=1}^{\infty} \frac{\Log^{r}j}{j^{r/p - 1}} \cdot \frac{j^{(r - p)/p}}{\Log^{r} j} \mathbb{E} \, \lvert X \rvert^{p} I_{A_{j}} + C \, \mathbb{E} \, \lvert X \rvert^{p} \\
&\quad \leqslant C(p,r) \, \mathbb{E} \, \lvert X \rvert^{p} < \infty
\end{align*}
and
\begin{align*}
&\sum_{n=1}^{\infty} \sum_{k=1}^{n} \frac{k^{r/p} \Log^{r} n}{n^{r/p + 1} \Log^{r^{2}/(r - p)}k} \mathbb{P} \left\{\lvert \varphi(X_{k}) \rvert > \frac{k^{1/p}}{\Log^{r/(r - p)}k} \right\} \\
&\qquad = \sum_{k=1}^{\infty} \sum_{n=k}^{\infty} \frac{k^{r/p} \Log^{r} n}{n^{r/p + 1} \Log^{r^{2}/(r - p)}k} \mathbb{P} \left\{\lvert \varphi(X_{k}) \rvert > \frac{k^{1/p}}{\Log^{r/(r - p)}k} \right\} \\
&\qquad \leqslant C(p,r) \sum_{k=1}^{\infty} \frac{\Log^{r}k}{\Log^{r^{2}/(r - p)}k} \mathbb{P} \left\{\lvert X_{k} \rvert > \frac{k^{1/p}}{\Log^{r/(r - p)}k} \right\} \\
&\qquad \leqslant C(p,r) \sum_{k=1}^{\infty} \frac{1}{\Log^{rp/(r - p)}k} \mathbb{P} \left\{\lvert X \rvert > \frac{k^{1/p}}{\Log^{r/(r - p)}k} \right\} \\
&\qquad \leqslant C(p,r) \sum_{k=1}^{\infty} \frac{1}{\Log^{rp/(r - p)}k} \mathbb{P} \left\{\lvert X \rvert^{p} > \frac{k}{\Log^{rp/(r - p)}k} \right\} \\
&\qquad \leqslant C(p,r) \, \mathbb{E} \, \lvert X \rvert^{p} < \infty.
\end{align*}
Moreover, putting
\begin{equation}\label{eq:3.9}
B_{k} = \left\{\frac{(k - 1)^{1/p}}{[\LLog (k - 1)]^{r/[p(r - 1)]}} < \lvert X \rvert \leqslant \frac{k^{1/p}}{(\LLog \, k)^{r/[p(r - 1)]}} \right\}, \quad k \geqslant 1
\end{equation}
Lemma 1 of \cite{Lita15} implies
\begin{align*}
&\sum_{n=1}^{\infty} \sum_{k=1}^{n} \frac{1}{n^{1/p + 1} (\LLog \, n)^{r(p - 1)/[p(r - 1)]}} \mathbb{E} \lvert \varphi(X_{k}) \rvert I_{\left\{\lvert \varphi(X_{k}) \rvert > \frac{k^{1/p}}{(\LLog \, k)^{r/[p(r - 1)]}} \right\}} \\
&\quad = \sum_{k=1}^{\infty} \sum_{n=k}^{\infty} \frac{1}{n^{1/p + 1} (\LLog \, n)^{r(p - 1)/[p(r - 1)]}} \mathbb{E} \lvert \varphi(X_{k}) \rvert I_{\left\{\lvert \varphi(X_{k}) \rvert > \frac{k^{1/p}}{(\LLog \, k)^{r/[p(r - 1)]}} \right\}} \\
&\quad \leqslant C(p,r) \sum_{k=1}^{\infty} \frac{1}{k^{1/p} (\LLog \, k)^{r(p - 1)/[p(r - 1)]}} \mathbb{E} \lvert \varphi(X_{k}) \rvert I_{\left\{\lvert \varphi(X_{k}) \rvert > \frac{k^{1/p}}{(\LLog \, k)^{r/[p(r - 1)]}} \right\}} \\
&\quad \leqslant C(p,r) \sum_{k=1}^{\infty} \frac{1}{k^{1/p} (\LLog \, k)^{r(p - 1)/[p(r - 1)]}} \mathbb{E} \, \lvert X_{k} \rvert I_{\left\{\lvert X_{k} \rvert > \frac{k^{1/p}}{(\LLog \, k)^{r/[p(r - 1)]}} \right\}} \\
&\quad \leqslant C(p,r) \sum_{k=1}^{\infty} \frac{1}{k^{1/p} (\LLog \, k)^{r(p - 1)/[p(r - 1)]}} \mathbb{E} \, \lvert X \rvert I_{\left\{\lvert X \rvert > \frac{k^{1/p}}{(\LLog \, k)^{r/[p(r - 1)]}} \right\}} \\
&\quad \leqslant C(p,r) \sum_{k=1}^{\infty} \sum_{j=k+1}^{\infty} \frac{1}{k^{1/p} (\LLog \, k)^{r(p - 1)/[p(r - 1)]}} \mathbb{E} \, \lvert X \rvert I_{B_{j}} \\
&\quad = C(p,r) \sum_{j=2}^{\infty} \sum_{k=1}^{j-1} \frac{1}{k^{1/p} (\LLog \, k)^{}} \mathbb{E} \, \lvert X \rvert I_{B_{j}} \\
&\quad \leqslant C(p,r) \sum_{j=2}^{\infty} \frac{(j - 1)^{1 - 1/p}}{[\LLog \, (j - 1)]^{r(p - 1)/[p(r - 1)]}} \mathbb{E} \, \lvert X \rvert I_{B_{j}} \\
&\quad \leqslant C(p,r) \sum_{j=2}^{\infty} \frac{(j - 1)^{1 - 1/p}}{[\LLog \, (j - 1)]^{r(p - 1)/[p(r - 1)]}} \cdot \frac{(j - 1)^{(1 - p)/p}}{[\LLog \, (j - 1)]^{r(1 - p)/[p(r - 1)]}} \mathbb{E} \, \lvert X \rvert^{p} I_{B_{j}} \\
&\quad \leqslant C(p,r) \, \mathbb{E} \, \lvert X \rvert^{p} < \infty
\end{align*}
by using \eqref{eq:3.7} and \eqref{eq:3.8}. The proof is complete.
\end{proof}

\begin{lemma}\label{lem:3}
Let $a,b > 0$ and $r$ be a real number. Then, for any $x \geqslant 0$,
\begin{equation*}
\int_{x}^{\infty} \frac{u^{a - 1} \Log^{r} u}{\mathrm{e}^{b u^{a}}} \, \mathrm{d}u < \infty.
\end{equation*}
Furthermore,
\begin{equation}\label{eq:3.10}
\int_{x}^{\infty} \frac{u^{a - 1} \Log^{r} u}{\mathrm{e}^{b u^{a}}} \, \mathrm{d}u = O \left[(1 + \Log^{r} x) \mathrm{e}^{-b x^{a}} \right], \quad x \rightarrow \infty.
\end{equation}
\end{lemma}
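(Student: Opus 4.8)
\noindent\textit{Proof sketch.}\quad The plan is to reduce the integral, via the substitution $v = bu^{a}$, to a tail of essentially the form $\int_{w}^{\infty}\mathrm{e}^{-v}\Log^{r} v\,\mathrm{d}v$, and then to estimate that tail by an elementary convexity argument.

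First I would substitute $v = bu^{a}$, so that $u^{a-1}\,\mathrm{d}u = \mathrm{d}v/(ab)$ and $u=(v/b)^{1/a}$; this rewrites the integral as
\begin{equation*}
\int_{x}^{\infty} \frac{u^{a - 1}\,\Log^{r} u}{\mathrm{e}^{b u^{a}}}\,\mathrm{d}u \;=\; \frac{1}{ab}\int_{bx^{a}}^{\infty} \mathrm{e}^{-v}\,\Log^{r}\big((v/b)^{1/a}\big)\,\mathrm{d}v .
\end{equation*}
Convergence for every $x\geqslant 0$ is then immediate: on $(0,\infty)$ the integrand $u^{a-1}\Log^{r} u\,\mathrm{e}^{-bu^{a}}$ is continuous, it is dominated near the origin by a constant multiple of $u^{a-1}$ (since $\Log^{r} u$ equals the constant $1$ for $u$ small), which is integrable at $0$, and it decays faster than $\mathrm{e}^{-v}$ at infinity.

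For the asymptotic statement I would first record the elementary comparison $\Log \, y\leqslant C(a,b)\,(1+\Log \, v)$ for $y=(v/b)^{1/a}$ and all $v\geqslant 1$, which is obtained simply by distinguishing the cases $(v/b)^{1/a}\leqslant\mathrm{e}$ and $(v/b)^{1/a}>\mathrm{e}$ in the truncation at $\mathrm{e}$ that occurs in the definition of $\mathrm{Log}$; consequently $\Log^{r}\big((v/b)^{1/a}\big)\leqslant C(a,b,r)\,(1+\Log^{r} v)$ when $r>0$, while $\Log^{r}\big((v/b)^{1/a}\big)\leqslant 1$ trivially when $r\leqslant 0$ because $\Log \, y\geqslant 1$ always. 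Writing $w:=bx^{a}$ and noting that $\Log^{r}(bx^{a})$ is comparable with $1+\Log^{r} x$ as $x\to\infty$, it then suffices to prove
\begin{equation*}
\int_{w}^{\infty}\mathrm{e}^{-v}\,\Log^{r} v\,\mathrm{d}v \;=\; O\big[(1+\Log^{r} w)\,\mathrm{e}^{-w}\big], \qquad w\to\infty .
\end{equation*}

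The case $r\leqslant 0$ of this last display is trivial, since then $\Log^{r} v\leqslant 1$ and the left-hand side is at most $\mathrm{e}^{-w}$. For $r>0$ the key --- and only substantive --- step is the inequality $\log v\leqslant \log w + (v-w)$, valid for $v\geqslant w\geqslant 1$ because $\log(v/w)\leqslant v/w-1\leqslant v-w$, combined with $(s+t)^{r}\leqslant 2^{r}(s^{r}+t^{r})$ for $s,t\geqslant 0$; taking $w$ large enough that $\Log \, w=\log w$, this gives
\begin{equation*}
\int_{w}^{\infty}\mathrm{e}^{-v}\,\Log^{r} v\,\mathrm{d}v \;\leqslant\; 2^{r}\Log^{r} w\int_{w}^{\infty}\mathrm{e}^{-v}\,\mathrm{d}v + 2^{r}\int_{w}^{\infty}\mathrm{e}^{-v}(v-w)^{r}\,\mathrm{d}v \;=\; 2^{r}\big[\Log^{r} w + \Gamma(r+1)\big]\mathrm{e}^{-w},
\end{equation*}
the last integral being computed through the shift $v=w+s$. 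Reassembling the three displays yields \eqref{eq:3.10}. The main obstacle is purely bookkeeping: handling the truncation at $\mathrm{e}$ in the definition of $\mathrm{Log}$ so that the comparison inequalities hold for every $v\geqslant 1$ (and not merely asymptotically), and keeping track of the constants $a,b$ produced by the substitution; once these are dispatched, everything rests on the single convexity inequality above.
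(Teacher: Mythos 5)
Your argument is correct, but it follows a genuinely different route from the paper. The paper proves \eqref{eq:3.10} by a single integration by parts, $\int_{x}^{\infty} u^{a-1}\log^{r}u\,\mathrm{e}^{-bu^{a}}\,\mathrm{d}u = \frac{\log^{r}x}{ab\,\mathrm{e}^{bx^{a}}} + \frac{r}{ab}\int_{x}^{\infty}\frac{\log^{r-1}u}{u^{a}}\cdot u^{a-1}\mathrm{e}^{-bu^{a}}\,\mathrm{d}u$, and then absorbs the bounded factor $\log^{r-1}u/u^{a}\leqslant C(a,r)$ on $[\mathrm{e},\infty)$ so that the remaining integral is exactly $\mathrm{e}^{-bx^{a}}/(ab)$ up to a constant; the case $x<\mathrm{e}$ is handled by splitting the integral at $\mathrm{e}$. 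You instead normalise the exponential by the substitution $v=bu^{a}$ and then control $\int_{w}^{\infty}\mathrm{e}^{-v}\log^{r}v\,\mathrm{d}v$ via the pointwise bound $\log v\leqslant\log w+(v-w)$, the elementary inequality $(s+t)^{r}\leqslant 2^{r}(s^{r}+t^{r})$, and the Gamma integral. Both are elementary and complete. The paper's proof is shorter once one accepts the boundedness of $\log^{r-1}u/u^{a}$, but its inequality chain is written for $r>0$ (for $r\leqslant 0$ the term $\frac{r}{ab}\int\cdots$ is nonpositive and should simply be dropped rather than bounded above by $\frac{rC}{ab}\int\cdots$); your version separates the cases $r>0$ and $r\leqslant 0$ cleanly and makes the dependence of the constant on $a,b,r$ explicit, at the cost of the bookkeeping around the truncation at $\mathrm{e}$ in $\mathrm{Log}$, which you handle correctly. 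For the qualitative finiteness claim at $x\geqslant 0$ both proofs reduce to integrability of $u^{a-1}$ near the origin.
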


\begin{proof}
Supposing $x \geqslant \mathrm{e}$, elementary integration by parts yields
\begin{align*}
\int_{x}^{\infty} \frac{u^{a - 1} \Log^{r} u}{\mathrm{e}^{b u^{a}}} \, \mathrm{d}u &= \int_{x}^{\infty} \frac{u^{a - 1} \log^{r} u}{\mathrm{e}^{b u^{a}}} \, \mathrm{d}u \\
&= \frac{\log^{r} x}{a b \mathrm{e}^{b x^{a}}} + \frac{r}{a b} \int_{x}^{\infty} \frac{\log^{r - 1} u}{u \mathrm{e}^{b u^{a}}} \, \mathrm{d}u \\
&= \frac{\log^{r} x}{a b \mathrm{e}^{b x^{a}}} + \frac{r}{a b} \int_{x}^{\infty} \frac{\log^{r - 1} u}{u^{a}} \cdot \frac{u^{a - 1}}{\mathrm{e}^{b u^{a}}} \, \mathrm{d}u \\
&\leqslant \frac{\log^{r} x}{a b \mathrm{e}^{b x^{a}}} + \frac{r C}{ab} \int_{x}^{\infty} \frac{u^{a - 1}}{\mathrm{e}^{b u^{a}}} \, \mathrm{d}u \\
&= \frac{\log^{r} x}{a b \mathrm{e}^{b x^{a}}} + \frac{r C}{(a b)^{2} \mathrm{e}^{b x^{a}}}
\end{align*}
where $C$ is a positive constant depending only on $a,r$ and \eqref{eq:3.10} holds. For $x < \mathrm{e}$, we still have
\begin{align*}
\int_{x}^{\infty} \frac{u^{a - 1} \Log^{r} u}{\mathrm{e}^{b u^{a}}} \, \mathrm{d}u &= \int_{x}^{\mathrm{e}} \frac{u^{a - 1}}{\mathrm{e}^{b u^{a}}} \, \mathrm{d}u + \int_{\mathrm{e}}^{\infty} \frac{u^{a - 1} \Log^{r} u}{\mathrm{e}^{b u^{a}}} \, \mathrm{d}u \\
&\leqslant \left(\frac{1}{a b \mathrm{e}^{b x^{a}}} - \frac{1}{a b \mathrm{e}^{b \mathrm{e}^{a}}} \right) + \frac{\log^{r} \mathrm{e}}{a b \mathrm{e}^{b \mathrm{e}^{a}}} + \frac{r C}{(a b)^{2} \mathrm{e}^{b \mathrm{e}^{a}}} \\
&= \frac{1}{a b \mathrm{e}^{b x^{a}}} + \frac{r C}{(a b)^{2} \mathrm{e}^{b \mathrm{e}^{a}}}
\end{align*}
and the conclusion follows.
\end{proof}

\begin{lemma}\label{lem:4}
Let $\{X_{n}, \, n \geqslant 1 \}$ be a sequence of random variables stochastically dominated by a random variable $X \in \mathscr{L}_{p}$ for some $1 < p < 2$ and $\ell_{k} := \big\lfloor\mathrm{e}^{{k}^{s}} \big\rfloor$, $0 < s < 1$. If $\varphi$ is a real-valued function defined on whole real line such that $\lvert \varphi(x) \rvert \leqslant \lvert x \rvert$ for all $x$ then for $r>p$,
\begin{gather}
\sum_{k=1}^{\infty} \sum_{n=1}^{\ell_{k+1}} \sum_{j=\ell_{k}+1}^{\ell_{k+1}} \frac{\Log^{r} k}{j \ell_{k}^{r/p} (\LLog \, \ell_{k})^{r^{2}(p - 1)/[p(r - 1)]}} \mathbb{E} \lvert \varphi(X_{n}) \rvert^{r} I_{\left\{\lvert \varphi(X_{n}) \rvert \leqslant \frac{n^{1/p}}{(\LLog \, n)^{r/[p(r - 1)]}} \right\}} < \infty, \label{eq:3.11} \\
\sum_{k=1}^{\infty} \sum_{n=1}^{\ell_{k+1}} \sum_{j=\ell_{k}+1}^{\ell_{k+1}} \frac{\Log^{r} k \, n^{r/p} \Log^{-r^{2}/(r - p)} n}{j \ell_{k}^{r/p} (\LLog \, \ell_{k})^{r^{2}(p - 1)/[p(r - 1)]}} \mathbb{P} \left\{\lvert \varphi(X_{n}) \rvert > \frac{n^{1/p}}{\Log^{r/(r - p)} n} \right\} < \infty \label{eq:3.12}
\end{gather}
and
\begin{equation}\label{eq:3.13}
\sum_{k=1}^{\infty} \sum_{n=1}^{\ell_{k+1}} \sum_{j=\ell_{k}+1}^{\ell_{k+1}} \frac{\Log^{r} k \, n^{r/p} (\LLog \, n)^{-r^{2}/[p(r - 1)]}}{j \ell_{k}^{r/p} (\LLog \, \ell_{k})^{r^{2}(p - 1)/[p(r - 1)]}} \mathbb{P} \left\{\lvert \varphi(X_{n}) \rvert > \frac{n^{1/p}}{(\LLog \, n)^{r/[p(r - 1)]}} \right\} < \infty.
\end{equation}
Furthermore, if $s \leqslant (r - p)/[p(r - 1)]$ then
\begin{equation}\label{eq:3.14}
\sum_{j=\ell_{k}+1}^{\ell_{k+1}} \frac{\mathbb{E} \lvert \varphi(X_{j}) \rvert I_{\left\{\lvert \varphi(X_{j}) \rvert > j^{1/p}/\Log^{r/(r - p)} j \right\}}}{\ell_{k}^{1/p} (\LLog \, \ell_{k})^{r(p - 1)/[p(r - 1)]}} = o(1), \quad n \rightarrow \infty.
\end{equation}
\end{lemma}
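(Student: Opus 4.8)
The plan is to treat \eqref{eq:3.11}--\eqref{eq:3.13} by one common two-stage reduction and to establish \eqref{eq:3.14} by a direct estimate; put $c:=r(r-p)/[p(r-1)]$, $\rho_n:=n^{1/p}/(\LLog n)^{r/[p(r-1)]}$ and $\sigma_n:=n^{1/p}/\Log^{r/(r-p)}n$. \emph{Stage one: removing the $j$- and $k$-sums.} Since $0<s<1$ we have $\sum_{j=\ell_k+1}^{\ell_{k+1}}j^{-1}\leqslant\log(\ell_{k+1}/\ell_k)=O\big((k+1)^s-k^s\big)=O(k^{s-1})$, while $\ell_k\asymp\mathrm{e}^{k^s}$, $\Log\ell_k\asymp k^s$ and $\LLog\ell_k\asymp\log k$. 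After carrying out the $j$-summation, interchange the order so that $n$ is outermost: for fixed $n$ the range becomes $k\geqslant\kappa(n)-1$ with $\kappa(n):=\min\{m:\ell_m\geqslant n\}$, and from $\log n\leqslant\kappa(n)^s\leqslant\log n+C$ one gets $\mathrm{e}^{(r/p)\kappa(n)^s}\asymp n^{r/p}$ and $\log\kappa(n)\asymp\LLog n$. The common $k$-factor is then $\asymp k^{s-1}(\log k)^{c}\,\mathrm{e}^{-(r/p)k^s}$, so the change of variable $v=k^s$ together with Lemma~\ref{lem:3} gives $\sum_{k\geqslant\kappa(n)-1}(k\text{-factor})=O\big((\LLog n)^{c}\,n^{-r/p}\big)$. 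Inserting this (and using $c-r^{2}/[p(r-1)]=-r/(r-1)$ for the third one) reduces \eqref{eq:3.11}, \eqref{eq:3.12}, \eqref{eq:3.13} to the finiteness of, respectively, $\sum_{n}(\LLog n)^{c}\,n^{-r/p}\,\mathbb{E}\abs{\varphi(X_n)}^{r}I_{\{\abs{\varphi(X_n)}\leqslant\rho_n\}}$, $\sum_{n}(\LLog n)^{c}\,\Log^{-r^{2}/(r-p)}n\,\mathbb{P}\{\abs{\varphi(X_n)}>\sigma_n\}$ and $\sum_{n}(\LLog n)^{-r/(r-1)}\,\mathbb{P}\{\abs{\varphi(X_n)}>\rho_n\}$.

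\emph{Stage two: the single sums.} I would pass from $X_n$ to $X$ by stochastic domination --- directly in the two probability sums, and in the moment sum after splitting $\{\abs{\varphi(X_n)}\leqslant\rho_n\}$ according as $\abs{X_n}\leqslant\rho_n$ or not, the part with $\abs{X_n}>\rho_n$ producing the extra term $\rho_n^{r}\,\mathbb{P}\{\abs{X}>\rho_n\}$; since $(\LLog n)^{c}\,\rho_n^{r}\,n^{-r/p}=(\LLog n)^{-r/(r-1)}$ this term has the same shape as the third series. Then split each truncation region into the annuli $A_k$ or $B_k$ from the proof of Lemma~\ref{lem:2} (the $A_k$ for the $\sigma_n$-type thresholds, the $B_k$ of \eqref{eq:3.9} for the $\rho_n$-type ones), interchange summation once more, evaluate the inner sums with \eqref{eq:3.5}--\eqref{eq:3.8} (together with the elementary mixed-index variant of \eqref{eq:3.7}--\eqref{eq:3.8}), and on each annulus use $\abs{X}^{r}\leqslant\abs{X}^{p}(\text{right endpoint})^{r-p}$, resp.\ $\mathbb{P}(\cdot)\leqslant(\text{left endpoint})^{-p}\,\mathbb{E}\abs{X}^{p}I_{(\cdot)}$. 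In every case the powers of $n$ cancel --- here $(r-p)/p-r/p+1=0$ and $r^{2}/(r-p)-rp/(r-p)=r$ enter --- and the surviving logarithmic factor stays bounded because $c\leqslant r$ (equivalently $p\geqslant1$); hence each series is $\leqslant C\,\mathbb{E}\abs{X}^{p}<\infty$.

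\emph{The fourth claim.} For \eqref{eq:3.14} I would start from $\mathbb{E}\abs{\varphi(X_j)}I_{\{\abs{\varphi(X_j)}>t\}}\leqslant C\,\mathbb{E}\abs{X}I_{\{\abs{X}>t\}}\leqslant C\,t^{-(p-1)}\,\mathbb{E}\abs{X}^{p}I_{\{\abs{X}>t\}}$. With $t=t_j:=j^{1/p}/\Log^{r/(r-p)}j$ (which is eventually increasing in $j$) and $\delta_k:=\mathbb{E}\abs{X}^{p}I_{\{\abs{X}>t_{\ell_k+1}\}}\to0$, the expression in \eqref{eq:3.14} is $\leqslant C\delta_k\,\ell_k^{-1/p}(\LLog\ell_k)^{-r(p-1)/[p(r-1)]}\sum_{j=\ell_k+1}^{\ell_{k+1}}j^{-(p-1)/p}\,\Log^{r(p-1)/(r-p)}j$, and using $\ell_{k+1}-\ell_k=O(\mathrm{e}^{k^s}k^{s-1})$, $\ell_k\asymp\mathrm{e}^{k^s}$, $\Log\ell_{k+1}\asymp k^s$ and $\LLog\ell_k\asymp\log k$ this becomes $O\big(\delta_k\,k^{sp(r-1)/(r-p)-1}(\log k)^{-r(p-1)/[p(r-1)]}\big)$. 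The hypothesis $s\leqslant(r-p)/[p(r-1)]$ is precisely what forces the exponent $sp(r-1)/(r-p)-1$ to be $\leqslant0$; then the deterministic factor is bounded and, since $\delta_k\to0$, the whole expression is $o(1)$ as $k\to\infty$.

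\emph{Main obstacle.} The argument is almost entirely bookkeeping, and the real danger is losing control of the numerous logarithmic exponents, which must cancel exactly. Two points genuinely need care: (i) $\sum_{k\geqslant K}(k\text{-factor})$ is \emph{not} of the order of its first term --- the factor $k^{s-1}$ gets absorbed by the summation and raises the order by $K^{1-s}$, which is exactly what Lemma~\ref{lem:3} provides after the substitution $v=k^s$; and (ii) the truncation in \eqref{eq:3.11} acts on $\abs{\varphi(X_n)}$, not on $\abs{X_n}$, so stochastic domination is not directly available, which is why the event splitting in Stage two is needed, at the price of the harmless extra term $\rho_n^{r}\,\mathbb{P}\{\abs{X}>\rho_n\}$.
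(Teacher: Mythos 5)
Your proposal is correct and follows essentially the same route as the paper's proof: bound the inner $j$-sum by $O(k^{s-1})$, interchange so that $n$ is outermost, estimate the resulting tail $k$-sum with Lemma~\ref{lem:3}, transfer to the dominating variable $X$, and finish with the annuli and the Stolz--Ces\`{a}ro asymptotics \eqref{eq:3.5}--\eqref{eq:3.8}, while \eqref{eq:3.14} is the same direct estimate in which $s \leqslant (r-p)/[p(r-1)]$ neutralises the power of $k$ and the vanishing tail moment supplies the $o(1)$. The only immaterial differences are that the paper transfers the truncated $r$-th moment to $X$ via the integral representation $\mathbb{E}\lvert Y\rvert^{r}I_{\{\lvert Y\rvert\leqslant a\}}\leqslant\int_{0}^{a^{r}}\mathbb{P}\{\lvert Y\rvert^{r}>u\}\,\mathrm{d}u$ rather than your event split (both produce the same extra term $\rho_{n}^{r}\,\mathbb{P}\{\lvert X\rvert>\rho_{n}\}$), and that for the moment part of \eqref{eq:3.11} it bounds the inner $n$-sum by $\ell_{k+1}$ times its largest term (hence uses Lemma~\ref{lem:3} with exponent $r/p-1$) instead of keeping $n$ outermost throughout.
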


\begin{proof}
Noticing that
\begin{gather}
\ell_{k+1} \sim \ell_{k}, \quad k \rightarrow \infty \label{eq:3.15} \\
\frac{\ell_{k+1} - \ell_{k}}{\ell_{k}} \sim s k^{s-1}, \quad k \rightarrow \infty \label{eq:3.16} \\
\LLog \, \ell_{k+1} \sim s \Log \, k, \quad k \rightarrow \infty \label{eq:3.17}
\end{gather}
it follows
\begin{align*}
&\sum_{k=1}^{\infty} \sum_{n=1}^{\ell_{k+1}} \sum_{j=\ell_{k}+1}^{\ell_{k+1}} \frac{\Log^{r} k}{j \ell_{k}^{r/p} (\LLog \, \ell_{k})^{r^{2}(p - 1)/[p(r - 1)]}} \mathbb{E} \lvert \varphi(X_{n}) \rvert^{r} I_{\left\{\lvert \varphi(X_{n}) \rvert \leqslant \frac{n^{1/p}}{(\LLog \, n)^{r/[p(r - 1)]}} \right\}} \\
&\quad \leqslant \sum_{k=1}^{\infty} \sum_{n=1}^{\ell_{k+1}} \frac{(\ell_{k+1} - \ell_{k})\Log^{r} k}{\ell_{k}^{r/p + 1} (\LLog \, \ell_{k})^{r^{2}(p - 1)/[p(r - 1)]}} \mathbb{E} \lvert \varphi(X_{n}) \rvert^{r} I_{\left\{\lvert \varphi(X_{n}) \rvert \leqslant \frac{n^{1/p}}{(\LLog \, n)^{r/[p(r - 1)]}} \right\}} \\
&\quad \leqslant \sum_{k=1}^{\infty} \sum_{n=1}^{\ell_{k+1}} \frac{(\ell_{k+1} - \ell_{k}) \Log^{r} k}{\ell_{k}^{r/p + 1} (\LLog \, \ell_{k})^{r^{2}(p - 1)/[p(r - 1)]}} \int_{0}^{n^{r/p}/(\LLog \, n)^{r^{2}/[p(r - 1)]}} \mathbb{P} \left\{\lvert \varphi(X_{k}) \rvert^{r} > u \right\} \mathrm{d}u \\
&\quad \leqslant \sum_{k=1}^{\infty} \sum_{n=1}^{\ell_{k+1}} \frac{(\ell_{k+1} - \ell_{k}) \Log^{r} k}{\ell_{k}^{r/p + 1} (\LLog \, \ell_{k})^{r^{2}(p - 1)/[p(r - 1)]}} \int_{0}^{n^{r/p}/(\LLog \, n)^{r^{2}/[p(r - 1)]}} \mathbb{P} \left\{\lvert X_{k} \rvert^{r} > u \right\} \mathrm{d}u \\
&\quad \leqslant C \sum_{k=1}^{\infty} \sum_{n=1}^{\ell_{k+1}} \frac{(\ell_{k+1} - \ell_{k}) \Log^{r} k}{\ell_{k}^{r/p + 1} (\LLog \, \ell_{k})^{r^{2}(p - 1)/[p(r - 1)]}} \int_{0}^{n^{r/p}/(\LLog \, n)^{r^{2}/[p(r - 1)]}} \mathbb{P} \left\{\lvert X \rvert^{r} > u \right\} \mathrm{d}u \\
&\quad = C \sum_{k=1}^{\infty} \sum_{n=1}^{\ell_{k+1}} \frac{(\ell_{k+1} - \ell_{k})\Log^{r} k}{\ell_{k}^{r/p + 1} (\LLog \, \ell_{k})^{r^{2}(p - 1)/[p(r - 1)]}} \mathbb{E} \lvert X \rvert^{r} I_{\left\{\lvert X \rvert \leqslant \frac{n^{1/p}}{(\LLog \, n)^{r/[p(r - 1)]}} \right\}} \\
&\qquad + C \sum_{k=1}^{\infty} \sum_{n=1}^{\ell_{k+1}} \frac{(\ell_{k+1} - \ell_{k})\Log^{r} k \, n^{r/p}(\LLog \, n)^{-r^{2}/[p(r - 1)]}}{\ell_{k}^{r/p + 1} (\LLog \, \ell_{k})^{r^{2}(p - 1)/[p(r - 1)]}} \mathbb{P} \left\{\lvert X \rvert > \frac{n^{1/p}}{(\LLog \, n)^{r/[p(r - 1)]}} \right\} \\
& \quad \leqslant C(p,r,s) \sum_{k=1}^{\infty} \sum_{n=1}^{\ell_{k+1}} \frac{k^{s-1} \Log^{r(r - p)/[p(r - 1)]} k}{\ell_{k}^{r/p}} \mathbb{E} \lvert X \rvert^{r} I_{\left\{\lvert X \rvert \leqslant \frac{n^{1/p}}{(\LLog \, n)^{r/[p(r - 1)]}} \right\}} \\
& \qquad + C(p,r,s) \sum_{k=1}^{\infty} \sum_{n=1}^{\ell_{k+1}} \frac{k^{s-1} \Log^{r(r - p)/[p(r - 1)]} k \, n^{r/p}}{\ell_{k}^{r/p} (\LLog \, n)^{r^{2}/[p(r - 1)]}} \mathbb{P} \left\{\lvert X \rvert > \frac{n^{1/p}}{(\LLog \, n)^{r/[p(r - 1)]}} \right\} \\
&\quad \leqslant C(p,r,s) \sum_{k=1}^{\infty} \frac{\ell_{k+1} k^{s-1} \Log^{r(r - p)/[p(r - 1)]} k}{\ell_{k}^{r/p}} \mathbb{E} \lvert X \rvert^{r} I_{\left\{\lvert X \rvert \leqslant \frac{\ell_{k+1}^{1/p}}{(\LLog \, \ell_{k+1})^{r/[p(r - 1)]}} \right\}} \\
&\qquad +  C(p,r,s) \sum_{k=1}^{\infty} \sum_{n=1}^{\ell_{k+1}} \frac{k^{s-1} \Log^{r(r - p)/[p(r - 1)]} k \, n^{r/p}}{\mathrm{e}^{(r/p)k^{s}} (\LLog \, n)^{r^{2}/[p(r - 1)]}} \mathbb{P} \left\{\lvert X \rvert > \frac{n^{1/p}}{(\LLog \, n)^{r/[p(r - 1)]}} \right\} \\
&\quad \leqslant C(p,r,s) \sum_{k=1}^{\infty} \sum_{n=1}^{\ell_{k+1}} \frac{k^{s-1} \Log^{r(r - p)/[p(r - 1)]} k}{\mathrm{e}^{(r/p - 1)k^{s}}} \mathbb{E} \lvert X \rvert^{r} I_{B_{n}} \\
&\qquad + C(p,r,s) \sum_{n=1}^{\infty} \sum_{\left\{k \colon \ell_{k+1} \geqslant n \right\}} \frac{k^{s-1} \Log^{r(r - p)/[p(r - 1)]} k \, n^{r/p}}{\mathrm{e}^{(r/p)k^{s}} (\LLog \, n)^{r^{2}/[p(r - 1)]}} \mathbb{P} \left\{\lvert X \rvert > \frac{n^{1/p}}{(\LLog \, n)^{r/[p(r - 1)]}} \right\} \\
&\quad = C(p,r,s) \sum_{n=1}^{\infty} \sum_{\left\{k \colon \ell_{k+1} \geqslant n \right\}} \frac{k^{s-1} \Log^{r(r - p)/[p(r - 1)]} k}{\mathrm{e}^{(r/p - 1)k^{s}}} \mathbb{E} \lvert X \rvert^{r} I_{B_{n}} \\
&\qquad + C(p,r,s) \sum_{n=1}^{\infty} \sum_{\left\{k \colon \ell_{k+1} \geqslant n \right\}} \frac{k^{s-1} \Log^{r(r - p)/[p(r - 1)]} k \, n^{r/p}}{\mathrm{e}^{(r/p)k^{s}} (\LLog \, n)^{r^{2}/[p(r - 1)]}} \mathbb{P} \left\{\lvert X \rvert > \frac{n^{1/p}}{(\LLog \, n)^{r/[p(r - 1)]}} \right\}
\end{align*}
where the summation ${\displaystyle \sum_{\left\{k\colon \ell_{k+1} \geqslant n \right\}}}$ is taken over all positive integers $k$ such that $\ell_{k+1} \geqslant n$ and $B_{n}$ is given by \eqref{eq:3.9}. Since
\begin{align*}
\ell_{k+1} := \big\lfloor \mathrm{e}^{(k+1)^{s}} \big\rfloor \geqslant n \quad &\Longleftrightarrow \quad \mathrm{e}^{(k+1)^{s}} \geqslant \lfloor n \rfloor^{-} = \lceil n \rceil = n \\
&\Longleftrightarrow \quad k \geqslant \left(\log n \right)^{1/s} - 1
\end{align*}
(see Proposition 1 of \cite{Embrechts13}), Lemma~\ref{lem:3} implies, for $n$ large enough,
\begin{align*}
&\sum_{\left\{k \colon \ell_{k+1} \geqslant n \right\}} \frac{k^{s-1} \Log^{r(r - p)/[p(r - 1)]} k}{\mathrm{e}^{(r/p - 1)k^{s}}} = \sum_{k=\varphi_{s}(n)}^{\infty} \frac{k^{s-1} \Log^{r(r - p)/[p(r - 1)]} k}{\mathrm{e}^{(r/p - 1)k^{s}}} \\
&\qquad \leqslant \int_{\varphi_{s}(n) - 1}^{\infty} \frac{u^{s-1} \Log^{r(r - p)/[p(r - 1)]} u}{\mathrm{e}^{(r/p - 1)u^{s}}} \, \mathrm{d}u = O \left[\frac{(\LLog \, n)^{r(r - p)/[p(r - 1)]}}{n^{r/p - 1}} \right]
\end{align*}
and
\begin{equation}\label{eq:3.18}
\begin{split}
&\sum_{\left\{k \colon \ell_{k+1} \geqslant n \right\}} \frac{k^{s-1} \Log^{r(r - p)/[p(r - 1)]} k}{\mathrm{e}^{(r/p)k^{s}}} = \sum_{k=\varphi_{s}(n)}^{\infty} \frac{k^{s-1} \Log^{r(r - p)/[p(r - 1)]} k}{\mathrm{e}^{(r/p)k^{s}}} \\
&\qquad \leqslant \int_{\varphi_{s}(n) - 1}^{\infty} \frac{u^{s-1} \Log^{r(r - p)/[p(r - 1)]} u}{\mathrm{e}^{(r/p)u^{s}}} \, \mathrm{d}u = O \left[\frac{(\LLog \, n)^{r(r - p)/[p(r - 1)]}}{n^{r/p}} \right]
\end{split}
\end{equation}
with
\begin{equation*}
\varphi_{s}(n) := \max \big\{\big\lceil(\log n)^{1/s} \big\rceil - 1,1 \big\} \sim \Log^{1/s} n, \quad n \rightarrow \infty.
\end{equation*}
Thus,
\begin{align*}
&\sum_{n=1}^{\infty} \sum_{\left\{k \colon \ell_{k+1} \geqslant n \right\}} \frac{k^{s-1} \Log^{r(r - p)/[p(r - 1)]} k}{\mathrm{e}^{(r/p - 1)k^{s}}} \mathbb{E} \lvert X \rvert^{r} I_{B_{n}} \\
& \qquad \leqslant C(p,r,s) \sum_{n=1}^{\infty} \frac{(\LLog \, n)^{r(r - p)/[p(r - 1)]}}{n^{r/p - 1}} \mathbb{E} \lvert X \rvert^{r} I_{B_{n}} \\
& \qquad \leqslant C(p,r,s) \sum_{n=1}^{\infty} \frac{(\LLog \, n)^{r(r - p)/[p(r - 1)]}}{n^{r/p - 1}} \frac{n^{(r - p)/p}}{(\LLog \, n)^{r(r - p)/[p(r - 1)]}} \mathbb{E} \lvert X \rvert^{p} I_{B_{n}} \\
& \qquad = C(p,r,s) \mathbb{E} \lvert X \rvert^{p} < \infty
\end{align*}
and Lemma 3 of \cite{Lita15} entails
\begin{align*}
&\sum_{n=1}^{\infty} \sum_{\left\{k \colon \ell_{k+1} \geqslant n \right\}} \frac{k^{s-1} \Log^{r(r - p)/[p(r - 1)]} k \, n^{r/p}}{\mathrm{e}^{(r/p)k^{s}} (\LLog \, n)^{r^{2}/[p(r - 1)]}} \mathbb{P} \left\{\lvert X \rvert > \frac{n^{1/p}}{(\LLog \, n)^{r/[p(r - 1)]}} \right\} \\
&\quad \leqslant C(p,r) \sum_{n=1}^{\infty} \frac{(\LLog \, n)^{r(r - p)/[p(r - 1)]} n^{r/p}}{n^{r/p} (\LLog \, n)^{r^{2}/[p(r - 1)]}} \mathbb{P} \left\{\lvert X \rvert > \frac{n^{1/p}}{(\LLog \, n)^{r/[p(r - 1)]}} \right\} \\
&\quad \leqslant C(p,r) \sum_{n=1}^{\infty} \frac{1}{(\LLog \, n)^{r/(r - 1)}} \mathbb{P} \left\{\lvert X \rvert^{p} > \frac{n}{(\LLog \, n)^{r/(r - 1)}} \right\} < \infty
\end{align*}
which establishes \eqref{eq:3.11}. Similarly, \eqref{eq:3.12} and \eqref{eq:3.13} both hold because
\begin{align*}
&\sum_{k=1}^{\infty} \sum_{n=1}^{\ell_{k+1}} \sum_{j=\ell_{k}+1}^{\ell_{k+1}} \frac{\Log^{r} k \, n^{r/p} \Log^{-r^{2}/(r - p)} n}{j \ell_{k}^{r/p} (\LLog \, \ell_{k})^{r^{2}(p - 1)/[p(r - 1)]}} \mathbb{P} \left\{\lvert \varphi(X_{n}) \rvert > \frac{n^{1/p}}{\Log^{r/(r - p)} n} \right\} \\
&\quad \leqslant \sum_{k=1}^{\infty} \sum_{n=1}^{\ell_{k+1}} \frac{(\ell_{k+1} - \ell_{k}) \Log^{r} k \, n^{r/p} \Log^{-r^{2}/(r - p)} n}{\ell_{k}^{r/p + 1} (\LLog \, \ell_{k})^{r^{2}(p - 1)/[p(r - 1)]}} \mathbb{P} \left\{\lvert \varphi(X_{n}) \rvert > \frac{n^{1/p}}{\Log^{r/(r - p)} n} \right\} \\
&\quad \leqslant \sum_{k=1}^{\infty} \sum_{n=1}^{\ell_{k+1}} \frac{(\ell_{k+1} - \ell_{k}) \Log^{r} k \, n^{r/p} \Log^{-r^{2}/(r - p)} n}{\ell_{k}^{r/p + 1} (\LLog \, \ell_{k})^{r^{2}(p - 1)/[p(r - 1)]}} \mathbb{P} \left\{\lvert X_{n} \rvert > \frac{n^{1/p}}{\Log^{r/(r - p)} n} \right\} \\
&\quad \leqslant C \sum_{k=1}^{\infty} \sum_{n=1}^{\ell_{k+1}} \frac{(\ell_{k+1} - \ell_{k}) \Log^{r} k \, n^{r/p} \Log^{-r^{2}/(r - p)} n}{\ell_{k}^{r/p + 1} (\LLog \, \ell_{k})^{r^{2}(p - 1)/[p(r - 1)]}} \mathbb{P} \left\{\lvert X \rvert > \frac{n^{1/p}}{\Log^{r/(r - p)} n} \right\} \\
&\quad \leqslant C(p,r,s) \sum_{n=1}^{\infty} \sum_{\left\{k\colon \ell_{k+1} \geqslant n \right\}} \frac{k^{s-1} \Log^{r(r - p)/[p(r - 1)]} k \, n^{r/p}}{\mathrm{e}^{(r/p)k^{s}} \Log^{r^{2}/(r - p)} n} \mathbb{P} \left\{\lvert X \rvert > \frac{n^{1/p}}{\Log^{r/(r - p)} n} \right\} \\
&\quad \leqslant C(p,r,s) \sum_{n=1}^{\infty} \frac{(\LLog \, n)^{r(r - p)/[p(r - 1)]}}{\Log^{r^{2}/(r - p)} n} \mathbb{P} \left\{\lvert X \rvert^{p} > \frac{n}{\Log^{rp/(r - p)} n} \right\} \\
&\quad \leqslant C(p,r,s) \sum_{n=1}^{\infty} \frac{1}{\Log^{rp/(r - p)} n} \mathbb{P} \left\{\lvert X \rvert^{p} > \frac{n}{\Log^{rp/(r - p)} n} \right\} < \infty
\end{align*}
and
\begin{align*}
&\sum_{k=1}^{\infty} \sum_{n=1}^{\ell_{k+1}} \sum_{j=\ell_{k}+1}^{\ell_{k+1}} \frac{\Log^{r} k \, n^{r/p} (\LLog \, n)^{-r^{2}/[p(r - 1)]}}{j \ell_{k}^{r/p} (\LLog \, \ell_{k})^{r^{2}(p - 1)/[p(r - 1)]}} \mathbb{P} \left\{\lvert \varphi(X_{n}) \rvert > \frac{n^{1/p}}{(\LLog \, n)^{r/[p(r - 1)]}} \right\} \\
&\quad \leqslant \sum_{k=1}^{\infty} \sum_{n=1}^{\ell_{k+1}} \frac{(\ell_{k+1} - \ell_{k}) \Log^{r} k \, n^{r/p} (\LLog \, n)^{-r^{2}/[p(r - 1)]}}{\ell_{k}^{r/p + 1} (\LLog \, \ell_{k})^{r^{2}(p - 1)/[p(r - 1)]}} \mathbb{P} \left\{\lvert \varphi(X_{n}) \rvert > \frac{n^{1/p}}{(\LLog \, n)^{r/[p(r - 1)]}} \right\} \\
&\quad \leqslant \sum_{k=1}^{\infty} \sum_{n=1}^{\ell_{k+1}} \frac{(\ell_{k+1} - \ell_{k}) \Log^{r} k \, n^{r/p} (\LLog \, n)^{-r^{2}/[p(r - 1)]}}{\ell_{k}^{r/p + 1} (\LLog \, \ell_{k})^{r^{2}(p - 1)/[p(r - 1)]}} \mathbb{P} \left\{\lvert X_{n} \rvert > \frac{n^{1/p}}{(\LLog \, n)^{r/[p(r - 1)]}} \right\} \\
&\quad \leqslant C \sum_{k=1}^{\infty} \sum_{n=1}^{\ell_{k+1}} \frac{(\ell_{k+1} - \ell_{k}) \Log^{r} k \, n^{r/p} (\LLog \, n)^{-r^{2}/[p(r - 1)]}}{\ell_{k}^{r/p + 1} (\LLog \, \ell_{k})^{r^{2}(p - 1)/[p(r - 1)]}} \mathbb{P} \left\{\lvert X \rvert > \frac{n^{1/p}}{(\LLog \, n)^{r/[p(r - 1)]}} \right\} \\
&\quad \leqslant C(p,r,s) \sum_{n=1}^{\infty} \sum_{\left\{k\colon \ell_{k+1} \geqslant n \right\}} \frac{k^{s-1} \Log^{r(r - p)/[p(r - 1)]} k \, n^{r/p}}{\mathrm{e}^{(r/p)k^{s}} (\LLog \, n)^{r^{2}/[p(r - 1)]}} \mathbb{P} \left\{\lvert X \rvert > \frac{n^{1/p}}{(\LLog \, n)^{r/[p(r - 1)]}} \right\} \\
&\quad \leqslant C(p,r,s) \sum_{n=1}^{\infty} \frac{1}{(\LLog \, n)^{r/(r - 1)}} \mathbb{P} \left\{\lvert X \rvert^{p} > \frac{n}{(\LLog \, n)^{r/(r - 1)}} \right\} < \infty
\end{align*}
by using \eqref{eq:3.18} and Lemma 3 of \cite{Lita15}. It remains to prove \eqref{eq:3.14}. According to Lemma 1 of \cite{Lita15}, we get
\begin{align*}
&\sum_{j=\ell_{k}+1}^{\ell_{k+1}} \frac{\mathbb{E} \lvert \varphi(X_{j}) \rvert I_{\left\{\lvert \varphi(X_{j}) \rvert > j^{1/p}/\Log^{r/(r - p)} j \right\}}}{\ell_{k}^{1/p} (\LLog \, \ell_{k})^{r(p - 1)/[p(r - 1)]}} \\
&\quad \leqslant \sum_{j=\ell_{k}+1}^{\ell_{k+1}} \frac{\mathbb{E} \lvert X_{j} \rvert I_{\left\{\lvert X_{j} \rvert > j^{1/p}/\Log^{r/(r - p)} j \right\}}}{\ell_{k}^{1/p} (\LLog \, \ell_{k})^{r(p - 1)/[p(r - 1)]}} \\
&\quad \leqslant C \sum_{j=\ell_{k}+1}^{\ell_{k+1}} \frac{\mathbb{E} \lvert X \rvert I_{\left\{\lvert X \rvert > j^{1/p}/\Log^{r/(r - p)} j \right\}}}{\ell_{k}^{1/p} (\LLog \, \ell_{k})^{r(p - 1)/[p(r - 1)]}} \\
&\quad \leqslant C \frac{\ell_{k+1} - \ell_{k}}{\ell_{k}^{1/p} (\LLog \, \ell_{k})^{r(p - 1)/[p(r - 1)]}} \mathbb{E} \lvert X \rvert I_{\left\{\lvert X \rvert > \ell_{k}^{1/p}/\Log^{r/(r - p)} \ell_{k} \right\}} \\
&\quad \leqslant C \frac{(\ell_{k+1} - \ell_{k}) \ell_{k}^{(1 - p)/p} \Log^{r(p - 1)/(r - p)} \, \ell_{k}}{\ell_{k}^{1/p} (\LLog \, \ell_{k})^{r(p - 1)/[p(r - 1)]}} \mathbb{E} \lvert X \rvert^{p} I_{\left\{\lvert X \rvert > \ell_{k}^{1/p}/\Log^{r/(r - p)} \ell_{k} \right\}} \\
&\quad \leqslant C(p,r,s) \frac{k^{s - 1 + sr(p - 1)/(r - p)}}{\Log^{r(p - 1)/[p(r - 1)]}k} \mathbb{E} \lvert X \rvert^{p} I_{\left\{\lvert X \rvert > \ell_{k}^{1/p}/\Log^{r/(r - p)} \ell_{k} \right\}}
\end{align*}
for the reason that $\Log \, \ell_{k} \sim k^{s} $ as $k \rightarrow \infty$.
\end{proof}

\begin{proofTheorem1}
Setting
\begin{equation}\label{eq:3.19}
\begin{gathered}
X_{n}' := X_{n} I_{\{0 \leqslant X_{n} \leqslant c_{n} \}} + c_{n} I_{\{X_{n} > c_{n} \}}, \\
X_{n}'' := X_{n} I_{\{c_{n} < X_{n} \leqslant d_{n} \}} - c_{n} I_{\{X_{n} > c_{n} \}} + d_{n} I_{\{X_{n} > d_{n} \}}, \\
X_{n}''' := X_{n} I_{\{X_{n} > d_{n} \}} - d_{n} I_{\{X_{n} > d_{n} \}}
\end{gathered}
\end{equation}
we have $X_{n}' + X_{n}'' + X_{n}''' = X_{n}$. Hence, assumptions (c), (d) guarantee for all $\varepsilon > 0$,
\begin{equation*}
\begin{split}
&\sum_{n=1}^{\infty} a_{n} \mathbb{P} \left\{\max_{1 \leqslant j \leqslant n} \abs{\sum_{k=1}^{j} (X_{k}' - \mathbb{E} \, X_{k}')} > \varepsilon b_{n} \right\} \\
&\quad \leqslant \sum_{n=1}^{\infty} \frac{a_{n}}{\varepsilon^{r} b_{n}^{r}} \mathbb{E} \left[\max_{1 \leqslant j \leqslant n} \abs{\sum_{k=1}^{j} (X_{k}' - \mathbb{E} \, X_{k}')} \right]^{r} \\
&\quad \leqslant \frac{1}{\varepsilon^{r}} \sum_{n=1}^{\infty} \frac{a_{n} \Lambda_{n}^{r}}{b_{n}^{r}} \sum_{k=1}^{n} \mathbb{E} \, \lvert X_{k}' - \mathbb{E} \, X_{k}' \rvert^{r} \qquad (\text{by \eqref{eq:2.2} with $\xi_{k} = k$ and $\eta = 0$}) \\
&\quad \leqslant \frac{C(r)}{\varepsilon^{r}} \sum_{n=1}^{\infty} \frac{a_{n} \Lambda_{n}^{r}}{b_{n}^{r}} \sum_{k=1}^{n} \big(\mathbb{E} \, X_{k}^{r} I_{\{X_{k} \leqslant c_{k} \}}  + c_{k}^{r} \mathbb{P} \{X_{k} > c_{k} \} \big) \\
&\quad = \frac{C(r)}{\varepsilon^{r}} \sum_{n=1}^{\infty} \sum_{k=1}^{n} \frac{a_{n} \Lambda_{n}^{r}}{b_{n}^{r}} \mathbb{E} \, X_{k}^{r} I_{\{X_{k} \leqslant c_{k} \}}  + \frac{C(r)}{\varepsilon^{r}} \sum_{n=1}^{\infty} \sum_{k=1}^{n} \frac{a_{n} \Lambda_{n}^{r} c_{k}^{r}}{b_{n}^{r}} \mathbb{P} \{X_{k} > c_{k} \} < \infty,
\end{split}
\end{equation*}
and Theorem 2.2 of \cite{Hu16} yields
\begin{equation}\label{eq:3.20}
\frac{1}{b_{n}} \sum_{k=1}^{n} (X_{k}' - \mathbb{E} \, X_{k}') \overset{\textnormal{a.s.}}{\longrightarrow} 0.
\end{equation}
Moreover, from Theorem 1 of \cite{Kounias69} and condition (e) we still have
\begin{equation*}
\begin{split}
&\sum_{n=1}^{\infty} a_{n} \mathbb{P} \left\{\max_{1 \leqslant j \leqslant n} \abs{\sum_{k=1}^{j} (X_{k}''' - \mathbb{E} \, X_{k}''')} > \varepsilon b_{n} \right\} \\
&\qquad \leqslant \sum_{n=1}^{\infty} \frac{a_{n}}{\varepsilon b_{n}} \sum_{k=1}^{n} \mathbb{E} \, \lvert X_{k}''' - \mathbb{E} \, X_{k}''' \rvert \\
&\qquad \leqslant \frac{2}{\varepsilon} \sum_{n=1}^{\infty} \sum_{k=1}^{n} \frac{a_{n}}{b_{n}} \mathbb{E} \, X_{k} I_{\{ X_{k} > d_{k} \}} \\
&\qquad < \infty.
\end{split}
\end{equation*}
which leads to
\begin{equation}\label{eq:3.21}
\frac{1}{b_{n}} \sum_{k=1}^{n} (X_{k}''' - \mathbb{E} \, X_{k}''') \overset{\textnormal{a.s.}}{\longrightarrow} 0
\end{equation}
according to Theorem 2.2 of \cite{Hu16}. Supposing
\begin{equation*}
T_{k} := \sum_{j=\ell_{k-1} + 1}^{\ell_{k}} (X_{j}'' - \mathbb{E} \, X_{j}'')
\end{equation*}
and $\ell_{0} := 0$, we get
\begin{align*}
&\sum_{k=1}^{\infty} \mathbb{P} \left\{\max_{1 \leqslant n \leqslant k+1} \abs{\sum_{i=1}^{\ell_{n}} (X_{i}'' - \mathbb{E} \, X_{i}'')} > \varepsilon b_{\ell_{k}}  \right\} \sum_{j=\ell_{k}+1}^{\ell_{k+1}} a_{j} \\
&\quad =\sum_{k=1}^{\infty} \mathbb{P} \left\{\max_{1 \leqslant n \leqslant k+1} \abs{\sum_{i=1}^{n} (T_{i} - \mathbb{E} \, T_{i})} > \varepsilon b_{\ell_{k}} \right\} \sum_{j=\ell_{k}+1}^{\ell_{k+1}} a_{j} \\
&\quad \leqslant \frac{1}{\varepsilon^{r}} \sum_{k=1}^{\infty} \sum_{j=\ell_{k}+1}^{\ell_{k+1}} \frac{a_{j}}{b_{\ell_{k}}^{r}} \, \mathbb{E} \left[\max_{1 \leqslant n \leqslant k+1} \abs{\sum_{i=1}^{n} (T_{i} - \mathbb{E} \, T_{i})} \right]^{r} \\
&\quad \leqslant \frac{1}{\varepsilon^{r}} \sum_{k=1}^{\infty} \sum_{j=\ell_{k}+1}^{\ell_{k+1}} \frac{a_{j} \Lambda_{k+1}^{r}}{b_{\ell_{k}}^{r}} \sum_{i=1}^{k+1} \mathbb{E} \, \lvert T_{i} - \mathbb{E} \, T_{i} \rvert^{r} \qquad (\text{by \eqref{eq:2.2} taking $\xi_{k} = \ell_{k}$ and $\eta = 0$}) \\
&\quad \leqslant \frac{C(r)}{\varepsilon^{r}} \sum_{k=1}^{\infty} \sum_{j=\ell_{k}+1}^{\ell_{k+1}} \frac{a_{j} \Lambda_{k+1}^{r}}{b_{\ell_{k}}^{r}} \sum_{i=1}^{k+1} \mathbb{E} \, \lvert T_{i} \rvert^{r} \\
&\quad \leqslant \frac{C(r)}{\varepsilon^{r}} \sum_{k=1}^{\infty} \sum_{j=\ell_{k}+1}^{\ell_{k+1}} \frac{a_{j} \Lambda_{k+1}^{r} \lambda_{\ell_{k} - \ell_{k-1}}^{r}}{b_{\ell_{k}}^{r}} \sum_{i=1}^{\ell_{k+1}} \mathbb{E} \, \lvert X_{i}'' - \mathbb{E} \, X_{i}'' \rvert^{r} \qquad \left(\substack{\text{by \eqref{eq:2.1} with $\xi_{k} = k$,} \\ \text{$\eta = \ell_{k-1}$ and $n = \ell_{k} - \ell_{k-1}$}} \right) \\
&\quad \leqslant \frac{C(r)}{\varepsilon^{r}} \sum_{k=1}^{\infty}  \sum_{j=\ell_{k}+1}^{\ell_{k+1}} \frac{a_{j} \Lambda_{k+1}^{r} \lambda_{\ell_{k} - \ell_{k-1}}^{r}}{b_{\ell_{k}}^{r}} \sum_{i=1}^{\ell_{k+1}} \mathbb{E} \, \lvert X_{i}'' \rvert^{r} \\
&\quad \leqslant \frac{C(r)}{\varepsilon^{r}} \sum_{k=1}^{\infty}  \sum_{j=\ell_{k}+1}^{\ell_{k+1}} \frac{a_{j} \Lambda_{k+1}^{r} \lambda_{\ell_{k} - \ell_{k-1}}^{r}}{b_{\ell_{k}}^{r}} \sum_{i=1}^{\ell_{k+1}} \big(\mathbb{E} \, X_{i}^{r} I_{\{c_{i} < X_{i} \leqslant d_{i} \}} + c_{i}^{r} \mathbb{P} \{X_{i} > c_{i} \} + d_{i}^{r} \mathbb{P} \{X_{i} > d_{i} \}\big) \\
&\quad \leqslant \frac{C(r)}{\varepsilon^{r}} \sum_{k=1}^{\infty} \sum_{j=\ell_{k}+1}^{\ell_{k+1}} \sum_{i=1}^{\ell_{k+1}} \frac{a_{j} \Lambda_{k+1}^{r} \lambda_{\ell_{k} - \ell_{k-1}}^{r}}{b_{\ell_{k}}^{r}} \mathbb{E} \, X_{i}^{r} I_{\{c_{i} < X_{i} \leqslant d_{i} \}} \\
& \qquad + \frac{C(r)}{\varepsilon^{r}} \sum_{k=1}^{\infty} \sum_{j=\ell_{k}+1}^{\ell_{k+1}} \sum_{i=1}^{\ell_{k+1}} \left(\frac{a_{j} \Lambda_{k+1}^{r} c_{i}^{r} \lambda_{\ell_{k} - \ell_{k-1}}^{r}}{b_{\ell_{k}}^{r}} \mathbb{P} \{X_{i} > c_{i} \} + \frac{a_{j} \Lambda_{k+1}^{r} d_{i}^{r} \lambda_{\ell_{k} - \ell_{k-1}}^{r}}{b_{\ell_{k}}^{r}} \mathbb{P} \{X_{i} > d_{i} \} \right) \\
& \quad < \infty
\end{align*}
employing assumptions (f), (g), (h). Further,
\begin{gather*}
\frac{1}{b_{\ell_{k}}} \sum_{j=\ell_{k}+1}^{\ell_{k+1}} \mathbb{E} \, X_{j} I_{\left\{c_{j} < X_{j} \leqslant d_{j} \right\}} \leqslant \frac{1}{b_{\ell_{k}}} \sum_{j=\ell_{k}+1}^{\ell_{k+1}} \mathbb{E} \, X_{j} I_{\left\{X_{j} > c_{j} \right\}}, \\
\frac{1}{b_{\ell_{k}}} \sum_{j=\ell_{k}+1}^{\ell_{k+1}} d_{j} \mathbb{P} \big\{X_{j} > d_{j} \big\} \leqslant \frac{1}{b_{\ell_{k}}} \sum_{j=\ell_{k}+1}^{\ell_{k+1}} \mathbb{E} \, X_{j} I_{\left\{X_{j} > d_{j} \right\}} \leqslant \frac{1}{b_{\ell_{k}}} \sum_{j=\ell_{k}+1}^{\ell_{k+1}} \mathbb{E} \, X_{j} I_{\left\{X_{j} > c_{j} \right\}}
\end{gather*}
and
\begin{equation*}
0 \leqslant \frac{1}{b_{\ell_{k}}} \sum_{j=\ell_{k}+1}^{\ell_{k+1}} \mathbb{E} \, X_{j}'' \leqslant \frac{1}{b_{\ell_{k}}} \sum_{j=\ell_{k}+1}^{\ell_{k+1}} \mathbb{E} \, X_{j} I_{\left\{c_{j} < X_{j} \leqslant d_{j} \right\}} + \frac{1}{b_{\ell_{k}}} \sum_{j=\ell_{k}+1}^{\ell_{k+1}} d_{j} \mathbb{P} \big\{X_{j} > d_{j} \big\}
\end{equation*}
so that, condition (b) entails $\sum_{j=\ell_{k}+1}^{\ell_{k+1}} \mathbb{E} \, X_{j}''/b_{\ell_{k}} = o(1)$ as $k \rightarrow \infty$. From Lemma~\ref{lem:1} we get
\begin{equation}\label{eq:3.22}
\frac{1}{b_{n}} \sum_{k=1}^{n} (X_{k}'' - \mathbb{E} \, X_{k}'') \overset{\textnormal{a.s.}}{\longrightarrow} 0.
\end{equation}
Thus, \eqref{eq:3.20}, \eqref{eq:3.21} and \eqref{eq:3.22} ensure
\begin{equation*}
\frac{1}{b_{n}} \sum_{k=1}^{n} (X_{k} - \mathbb{E} \, X_{k}) = \frac{1}{b_{n}} \sum_{k=1}^{n} (X_{k}' - \mathbb{E} \, X_{k}') + \frac{1}{b_{n}} \sum_{k=1}^{n} (X_{k}'' - \mathbb{E} \, X_{k}'') + \frac{1}{b_{n}} \sum_{k=1}^{n} (X_{k}''' - \mathbb{E} \, X_{k}''') \overset{\textnormal{a.s.}}{\longrightarrow} 0
\end{equation*}
establishing the thesis.
\end{proofTheorem1}

\begin{proofCorollary}
By noting that $X_{n} = X_{n}^{+} - X_{n}^{-}$, where $X_{n}^{+} := \max(X_{n},0) \geqslant 0$ and $X_{n}^{-} := \max(-X_{n},0) \geqslant 0$, we have
\begin{equation*}
\frac{1}{b_{n}} \sum_{k=1}^{n} (X_{k} - \mathbb{E} \, X_{k}) = \frac{1}{b_{n}} \sum_{k=1}^{n} (X_{k}^{+} - \mathbb{E} \, X_{k}^{+}) - \frac{1}{b_{n}} \sum_{k=1}^{n} (X_{k}^{-} - \mathbb{E} \, X_{k}^{-}).
\end{equation*}
It remains to show
\begin{equation*}
\frac{1}{b_{n}} \sum_{k=1}^{n} (X_{k}^{+} - \mathbb{E} \, X_{k}^{+}) \overset{\textnormal{a.s.}}{\longrightarrow} 0 \quad \text{and} \quad \frac{1}{b_{n}} \sum_{k=1}^{n} (X_{k}^{-} - \mathbb{E} \, X_{k}^{-}) \overset{\textnormal{a.s.}}{\longrightarrow} 0.
\end{equation*}
Since prior assertions can be proven in the same way, we only prove
\begin{equation}\label{eq:3.23}
\frac{1}{b_{n}} \sum_{k=1}^{n} (X_{k}^{+} - \mathbb{E} \, X_{k}^{+}) \overset{\textnormal{a.s.}}{\longrightarrow} 0.
\end{equation}
The sequence of random variables $\{X_{n}^{+}, \, n \geqslant 1\}$ is pairwise NQD because $x \mapsto \max(x,0)$ is a nondecreasing function (see \cite{Lehmann66}). Let $\{\xi_{k} \}$ be an increasing sequence of nonnegative integers and $S_{k}^{+} := \sum_{j=\xi_{k-1}+1}^{\xi_{k}} g_{s,t}(X_{j}^{+})$, $t > s \geqslant 0$. We have, for all $\eta \geqslant 0$ and $n \geqslant 1$,
\begin{align*}
&\mathbb{E} \left\{\sum_{k=\eta + 1}^{\eta + n} \sum_{j=\xi_{k-1}+1}^{\xi_{k}} \big[g_{s,t}(X_{j}^{+}) - \mathbb{E} \,  g_{s,t}(X_{j}^{+}) \big] \right\}^{2} \\
&\quad = \mathbb{E} \left[\sum_{k=\eta + 1}^{\eta + n} (S_{k}^{+} - \mathbb{E} \,  S_{k}^{+}) \right]^{2} \\
&\quad \leqslant \sum_{k=\eta + 1}^{\eta + n} \mathbb{E} (S_{k}^{+} - \mathbb{E} \, S_{k}^{+})^{2} + 2 \sum_{\substack{k < j}} \mathbb{E}\big[(S_{k}^{+} - \mathbb{E} \, S_{k}^{+}) (S_{j}^{+} - \mathbb{E} \, S_{j}^{+}) \big] \\
&\quad = \sum_{k=\eta + 1}^{\eta + n} \mathbb{E} (S_{k}^{+} - \mathbb{E} \, S_{k}^{+})^{2} \\
&\qquad + 2 \sum_{\substack{k < j}} \mathbb{E} \left\{\sum_{p=\xi_{k-1}+1}^{\xi_{k}} \sum_{q=\xi_{j-1}+1}^{\xi_{j}} \big[g_{s,t}(X_{p}^{+}) - \mathbb{E} \, g_{s,t}(X_{p}^{+}) \big] \big[g_{s,t}(X_{q}^{+}) - \mathbb{E} \, g_{s,t}(X_{q}^{+}) \big] \right\} \\
&\quad = \sum_{k=\eta + 1}^{\eta + n} \mathbb{E} (S_{k}^{+} - \mathbb{E} \, S_{k}^{+})^{2} \\
&\qquad + 2 \sum_{\substack{k < j}} \sum_{p=\xi_{k-1}+1}^{\xi_{k}} \sum_{q=\xi_{j-1}+1}^{\xi_{j}} \mathbb{E} \big[g_{s,t}(X_{p}^{+}) - \mathbb{E} \, g_{s,t}(X_{p}^{+}) \big] \big[g_{s,t}(X_{q}^{+}) - \mathbb{E} \, g_{s,t}(X_{q}^{+}) \big] \\
&\quad \leqslant \sum_{k=\eta + 1}^{\eta + n} \mathbb{E} (S_{k}^{+} - \mathbb{E} \, S_{k}^{+})^{2} \\
&\qquad + 2 \sum_{\substack{k < j}} \sum_{p=\xi_{k-1}+1}^{\xi_{k}} \sum_{q=\xi_{j-1}+1}^{\xi_{j}} \mathbb{E} \big[g_{s,t}(X_{p}^{+}) - \mathbb{E} \, g_{s,t}(X_{p}^{+}) \big] \mathbb{E} \big[g_{s,t}(X_{q}^{+}) - \mathbb{E} \, g_{s,t}(X_{q}^{+}) \big] \\
&\quad \leqslant \sum_{k=\eta + 1}^{\eta + n} \mathbb{E} \left\{\sum_{j=\xi_{k-1}+1}^{\xi_{k}} \big[g_{s,t}(X_{j}^{+}) - \mathbb{E} \,  g_{s,t}(X_{j}^{+}) \big] \right\}^{2}
\end{align*}
by noting that $\mathbb{E}(X Y) \leqslant \mathbb{E} \, X \, \mathbb{E} \, Y$ for any NQD random variables $X,Y$ and $\{g_{s,t}(X_{n}^{+}) - \mathbb{E} \, g_{s,t}(X_{n}^{+}), \, n \geqslant 1\}$ is still a sequence of pairwise NQD random variables because $x \mapsto g_{s,t}(x) - C(s,t)$ is a nondecreasing function. Thus, $\{X_{n}^{+}, \, n \geqslant 1\}$ satisfies \eqref{eq:2.1} with $r = 2$ and $\lambda_{n} = 1$ for all $n$. Hence, $\Lambda_{n} \leqslant \log(2n)$ and considering $\ell_{k} = \big\lfloor\mathrm{e}^{{k}^{(2 - p)/p}} \big\rfloor$, $a_{n} = 1/n$, $b_{n} = n^{1/p} (\LLog \, n)^{2(p - 1)/p}$, $c_{n} = n^{1/p}/(\Log \, n)^{2/(2 - p)}$, $d_{n} = n^{1/p}/(\LLog \, n)^{2/p}$, $m_{k} = 2^{k}$, we obtain ${\displaystyle \limsup_{k \rightarrow \infty}} \, b_{m_{k+1}}/b_{m_{k}} = 2^{1/p}$,
\begin{equation*}
\liminf_{k \rightarrow \infty} \sum_{j=m_{k}}^{m_{k+1} - 1} \frac{1}{j} \geqslant \liminf_{k \rightarrow \infty} \int_{2^{k}}^{2^{k+1}} \frac{\mathrm{d}x}{x} = \log 2
\end{equation*}
which shows that condition (a) of Theorem~\ref{thr:1} holds. 
The remaining assumptions of Theorem~\ref{thr:1} are a consequence of Lemmas~\ref{lem:2} and~\ref{lem:4}. The proof is complete.
\end{proofCorollary}

\begin{acknowledgements}
This work is a contribution to the Project UIDB/04035/2020, funded by FCT - Funda\c{c}\~{a}o para a Ci\^{e}ncia e a Tecnologia, Portugal.
\end{acknowledgements}


\begin{thebibliography}{}

\bibitem{Chandra92} T.K. Chandra and A. Goswami, Ces\`{a}ro uniform integrability and the strong laws of large numbers, {\it Sankhy\={a} Ser. A} {\bf 54} (1992), 215--231.

\bibitem{Chandra93} T.K. Chandra and A. Goswami, (1993) Corrigendum: Ces\`{a}ro uniform integrability and the strong law of large numbers, {\it Sankhy\={a} Ser A} {\bf 55} (1993), 327--328.

\bibitem{Chen16} P. Chen and S.H. Sung, A strong law of large numbers for nonnegative random variables and applications, {\it Statist. Probab. Lett.} {\bf 118} (2016), 80--86.

\bibitem{Choi85} B.D. Choi, S.H. Sung, On convergence of $(S_{n} - E \, S_{n})/n^{1/r}$, $1 < r < 2$, for pairwise indepedent random variables, {\it Bull. Korean Math. Soc.} {\bf 22} (1985), 79--82.

\bibitem{Chow97} Y.S. Chow and H. Teicher, {\it Probability Theory: Independence, Interchangeability, Martingales} (third edition) Springer (New York, 1997).

\bibitem{Csorgo83} S. Cs\"{o}rg\H{o}, K. Tandori and V. Totik, On the strong law of large numbers for pairwise independent random variables, {\it Acta Math. Hungar.} {\bf 42} (1983), 319--330.

\bibitem{Embrechts13} P. Embrechts, M. Hofert, A note on generalized inverses, {\it Math. Meth. Oper. Res.} {\bf77} (2013), 423--432.

\bibitem{Etemadi81} N. Etemadi, An elementary proof of the strong law of large numbers, {\it Z. Wahrscheinlichkeitstheorie verw. Gebiete} {\bf 55} (1981), 119--122.

\bibitem{Hu16} T.C. Hu, S.H. Sung and A. Volodin, A note on the strong laws of large numbers for random variables, {\it Acta Math. Hungar.} {\bf 150}(2) (2016), 412--422.

\bibitem{Kounias69} E.G. Kounias and T.S. Weng, An inequality and almost sure convergence, {\it Ann. Math. Statist.} {\bf 40}(3) (1969), 1091--1093.

\bibitem{Lehmann66} E.L. Lehmann, Some concepts of dependence, {\it Ann. Math. Statist.} {\bf 37} (1966), 1137--1153.
    
\bibitem{Li06} D. Li, A. Rosalsky, A.I. Volodin, On the strong law of large numbers for sequences of pairwise negative quadrant dependent random variables, {\it Bull. Inst. Math. Acad. Sin. (N.S.)} {\bf 1}(2) (2006), 281--305.

\bibitem{Lita15} J. Lita da Silva, Almost sure convergence for weighted sums of extended negatively dependent random variables, {\it Acta Math. Hungar.} {\bf 146} (2015), 56--70.

\bibitem{Lita18} J. Lita da Silva, Strong laws of large numbers for pairwise quadrant dependent random variables, {\it Statist. Probab. Lett.} {\bf 137} (2018), 349--358.

\bibitem{Martikainen95} A. Martikainen, On the strong law of large numbers for sums of pairwise independent random variables, {\it Statist. Probab. Lett.} {\bf 25} (1995), 21--26.

\bibitem{Matula92} P. Matu{\l}a, A note on the almost sure convergence of sums of negatively dependent random variables, {\it Statist. Probab. Lett.} {\bf 15} (1992), 209--213.

\bibitem{Moricz76} F. M\'{o}ricz, Moment inequalities and the strong laws of large numbers, {\it Z. Wahrscheinlichkeitstheorie verw. Gebiete} {\bf 35} (1976), 299--314.

\bibitem{Petrov08} V.V. Petrov, On the strong law of large numbers for nonnegative random variables, {\it Theory Probab. Appl.} {\bf 53} (2008), 346--349.

\bibitem{Sung14} S.H. Sung, Marcinkiewicz--Zygmund type strong law of large numbers for pairwise i.i.d. random variables, {\it J. Theor. Probab.} {\bf27} (2014), 96--106.

\bibitem{Walk05} H. Walk, Strong laws of large numbers by elementary tauberian arguments, {\it Monatsh. Math.}, {\bf 144} (2005), 329--346.

\end{thebibliography}
\end{document}